 \newtheorem{theorem}{Theorem}
 \newtheorem{lemma}[theorem]{Lemma}
 \newtheorem{proposition}[theorem]{Proposition}
 \newtheorem{corollary}[theorem]{Corollary}
 \newtheorem{remark}[theorem]{Remark}
\theoremstyle{definition}
 \numberwithin{theorem}{section}
 \numberwithin{equation}{section}
\author{Guanglian Li\thanks{Institut f\"ur Numerische Simulation,
            Universit\"at Bonn,
            Wegelerstra{\ss}e 6, D-53115 Bonn, Germany;
             li@ins.uni-bonn.de, peterseim@ins.uni-bonn.de, schedensack@ins.uni-bonn.de
             }\;\,\thanks{The authors gratefully acknowledge support by the Hausdorff
                Center for Mathematics Bonn.}
         \;and Daniel Peterseim\footnotemark[1]\;\,\footnotemark[2]\;\,\thanks{D.~Peterseim 
               gratefully acknowledges support by  Deutsche Forschungsgemeinschaft in the
                Priority Program 1748 "Reliable simulation techniques in solid mechanics:
                Development of non-standard discretization methods, mechanical
                and mathematical analysis" under the project
                "Adaptive isogeometric modeling of propagating strong
                discontinuities in heterogeneous materials".}
          \;and Mira Schedensack\footnotemark[1]\;\,\footnotemark[2]
}
\title{Error analysis of a variational multiscale stabilization for 
convection-dominated diffusion equations in 2d}
 \date{\today}
\newcommand{\prnt}[1]{\left( #1 \right)}
\newcommand{\norm}[1]{\left\|#1\right\|}
\newcommand{\normHsemi}[2]{\left|#1\right|_{H^{1}\prnt{#2}}}
\newcommand{\normI}[2]{\norm{#1}_{L^{\infty}\prnt{#2}}}
\newcommand{\normL}[2]{\norm{#1}_{L^2\prnt{#2}}}
\newcommand{\tri}{\mathcal{T}}
\newcommand{\corrector}{\mathcal{C}}
\newcommand{\CI}{C_{I_H}\hspace{-0.5ex}\big(\tfrac{H}{h}\big)}
\newcommand{\Col}[1][\ell]{C_{\mathrm{ol},#1}(\epsilon)}
\newcommand{\Peclet}[1]{\mathrm{Pe}_{H,b,\epsilon}\,}
\begin{document}
\maketitle
\begin{abstract}
We formulate a stabilized quasi-optimal Petrov-Galerkin method for 
singularly perturbed convection-diffusion problems based on the variational 
multiscale method. 
The stabilization is of Petrov-Galerkin type with a standard finite element 
trial space and a problem-dependent test space based on pre-computed 
fine-scale correctors. The exponential decay of these correctors and their 
localisation to local patch problems, which depend on the direction of 
the velocity field and the singular perturbation parameter, is rigorously justified. 
Under moderate assumptions, this stabilization guarantees stability and 
quasi-optimal rate of convergence for arbitrary mesh P\'eclet numbers 
on fairly coarse meshes at the cost of additional inter-element communication.
\end{abstract}

\section{Introduction}

Given a domain $\Omega\subset\mathbb{R}^2$, a singular perturbation parameter 
$0<\epsilon\leq 1$, a velocity field $b\in (L^\infty(\Omega))^2$ and 
some force $f\in H^{-1}(\Omega)$, the convection-diffusion equation 
seeks $u\in V:=H^1_0(\Omega)$ such that
\begin{equation}
\begin{aligned} \label{eq:original}
-\epsilon\Delta u+b\cdot\nabla u&=f \quad \text{in } \Omega,\\
u&=0 \quad \text{on } \partial\Omega.
\end{aligned}
\end{equation}
We assume that the velocity field $b$ is incompressible, i.e., $\nabla\cdot b=0$. 
The focus of this paper is on the convection-dominated regime 
with large P\'eclet number $\mathrm{Pe}=\normI{b}{\Omega}/\epsilon$.

%1. the difficulty of convection-dominated problems+literature on it.
For reasonable small P\'eclet numbers, classical Galerkin finite element 
methods (FEMs) perform well. However, if the P\'eclet number 
increases, then steep gradients of $u$ occur and boundary layers appear, 
which require a much finer mesh to capture the characteristic width of those boundary layers. 
Consequently, local corrections are needed at those layers and a numerical method in which the smooth solution 
regions are not polluted by those layers is desirable. 
The thickness of the parabolic layer is $\mathcal{O}(\sqrt{\epsilon})$ and 
$\mathcal{O}(\epsilon)$ for the exponential layer, which have to be resolved 
for a stable approximation with a standard Galerkin FEM. 
Furthermore, it holds that 
$\normHsemi{u}{\Omega^*}=\mathcal{O}(\epsilon^{-\frac{1}{4}})$ and 
$\normHsemi{u}{\Omega^o}=\mathcal{O}(\epsilon^{-\frac{1}{2}})$ with 
small neighbourhoods $\Omega^*$ and $\Omega^o$  of the parabolic 
and the exponential boundary layer, respectively \cite{roos1996numerical, John2009}.

Numerous numerical methods have been proposed in the past few decades aiming at 
solving the convection dominated problem \eqref{eq:original} efficiently and accurately.  
Upwinding methods for stabilization of the exponential boundary layers 
combined with refinement near the parabolic boundary layers are formulated. 
Among them are streamline upwind/Petrov-Galerkin method (SUPG) or 
Galerkin least squares method (GLS) \cite{FRANCA1992253,Christiansen2016}, 
hp finite element methods \cite{melenk1997robust,Melenk2002}, 
discontinuous Petrov-Galerkin methods (DPG) \cite{Demkowicz2012396}, 
residual-free bubble approaches (RFB) \cite{Brezzi2000,CangianiSueli2005a,CangianiSueli2005b}, 
methods with an additional non-linear diffusion \cite{Barrenechea2016},
methods with stabilization by local orthogonal sub-scales \cite{Codina2000}
and hybridizable discontinuous Galerkin (HDG) methods \cite{Qiu2015}.
Among the multiscale methods are
variational multiscale methods (VMS) \cite{Hughes&Sangalli2007,LarsonMalqvist2009}, 
multiscale finite element methods (MsFEM) \cite{Park&Hou2004, Efendiev15convection}, 
multiscale hybrid-mixed methods (MHM) \cite{HarderParedesValentin2015} and
local orthogonal decomposition methods (LOD) \cite{Elfverson2015}. Specifically, 
the residual-based stabilization methods (SUPG, GLS and RFB) incorporate global stability properties 
into high accuracy in local regions away from boundary layers. We refer to \cite{roos1996numerical} 
for an overview of robust numerical methods for singular perturbed problems.
In this paper, our focus is on the construction and the error analysis of a 
stable and accurate LOD method based 
on \cite{Hughes&Sangalli2007,Peterseim:2014, Mlqvist.Peterseim:2011}.

VMS was designed for solving multiscale problems by embedding fine-scale information into 
the coarse-scale framework. 
Essentially, the efficiency and accuracy rely on the construction of a problem-dependent 
stable projector from a larger fine space onto a relatively much smaller coarse space. 
Our motivation for this paper is originated from
\cite{Hughes&Sangalli2007}, where the authors derived an explicit formula for the one-dimensional 
fine-scale Green's function arising in VMS. 
The smaller the support of the fine-scale Green's function, the more favorable the localized 
method (e.g., \cite{Mlqvist.Peterseim:2011, Peterseim:2014}) in solving~\eqref{eq:original}. 
In particular, the authors compared the fine-scale Green's functions derived by 
the $L^2$-projector with that derived by the $H_{0}^{1}$-projector, and concluded that the latter 
outweighed the former in the one-dimensional case. In addition, examples were 
shown for the two-dimensional 
case that the $H_{0}^{1}$-projector would exceed the $L^2$-projector as well. 
There is a recent work \cite{Elfverson2015} on the convection-diffusion 
problem employing the $L^2$-projector in the framework of VMS and LOD. 
The author shows convergence of the localized method and tests the method using 
$H_{0}^1$-projector and claim that the superiority of $H_{0}^{1}$-projector over the 
$L^2$-projector is not valid for the two-dimensional case.

In the one-dimensional case, the $H^1_0$ projection equals the nodal
interpolation. Therefore, another possible generalization of the 1d case to 
higher dimensions is to use nodal interpolation in the VMS. This approach 
was previously utilised in~\cite{LarsonMalqvist2009} and seems to work 
better than averaging type operators.
In this paper, we show that a VMS based on the nodal interpolation operator coupled with 
a Petrov-Galerkin method is stable and locally quasi-optimal 
for the convection-dominated problem \eqref{eq:original} with 
no spurious oscillations and no smearing.
As for other elliptic PDEs the ideal VMS is turned into a practical 
method by localizing the support of the VMS basis functions \cite{Peterseim:2015}.
Inspired by the numerical results of the fine-scale Green's functions displayed 
in \cite{Hughes&Sangalli2007} and the proof in our paper as well, 
a $b$-biased local region is proposed as the numerical domain for approximating the ideal method. 
The convergence of this localization is proved under the assumption that the local 
region is sufficiently large.

The remainder of this paper is organized as follows. In Section~\ref{s:modelproblem}, a detailed description 
of the problem considered in this paper is shown. 
In Section~\ref{s:idealMethod}, we propose a new VMS method based on the nodal 
interpolation and denote it as the ideal method. 
Its stability and local quasi-optimality are displayed. 
In Section~\ref{s:exponentialdecay}, we estimate the error of the global 
correctors outside a certain local 
patch and show an exponential decay of the error with respect to the size of the local patch. 
Inspired by the results in Section~\ref{s:exponentialdecay}, we formulate the localization algorithm in 
Section~\ref{s:lod} for the ideal method proposed in Section~\ref{s:idealMethod}, and display the 
stability of this algorithm as well as the convergence. 
A numerical experiment is provided in Section~\ref{s:numerics} for the validation of 
our method and we end this paper with conclusions in Section~\ref{s:conclusions}.

%a. especially the one by Hughes: concisely the problem he solves and proposed.
%2. short review of LOD and why we choose this method+literature.

%a. the importance of a projector in LOD.
%b. focus on Elf's paper and relate it to hughes', then write in details what we solve in
%the current paper.
%3. structure of this paper.

\section{Model problem and standard finite elements}\label{s:modelproblem}

We assume that the parameter $\epsilon\leq 1$ and 
$b\in L^{\infty}(\Omega;\mathbb{R}^2)$ is a divergence-free vector field 
and we define the bilinear 
form $a$ on $V\times V$ associated to~\eqref{eq:original} by
\begin{align}
a(u,v)=\epsilon\int_{\Omega}\nabla u\cdot \nabla v\,dx
    +\int_{\Omega}(b\cdot\nabla u)\,v\,dx
  \qquad\text{for all }u,v\in V.
\end{align}
Since $\nabla\cdot b=0$, an integration by parts implies that 
the bilinear form $a$ is $V$-elliptic, i.e., 
\begin{equation}\label{eqn:v-elliptic}
\begin{aligned}
  a(v,v)&= \epsilon \normHsemi{v}{\Omega}^2
   \qquad\text{ for all }v\in V.
\end{aligned}
\end{equation}
Furthermore, a Poincar\'e-Friedrichs inequality leads to the 
existence of some $C(\Omega,b)$ that may depend on (the diameter of) the 
domain $\Omega$ and the $L^\infty$-norm of $b$ such that $a$ is continuous, i.e.,
for all $u,v\in V$ it holds that
\begin{equation}\label{eqn:continuitya}
\begin{aligned}
a(u,v)&\leq \epsilon \normHsemi{u}{\Omega} \normHsemi{v}{\Omega}
    + \|b\|_{L^\infty(\Omega)} \normHsemi{u}{\Omega} \normL{v}{\Omega}\\
  &\leq C(\Omega,b) \normHsemi{u}{\Omega}\normHsemi{v}{\Omega}.
\end{aligned}
\end{equation}
Here, we used that $\epsilon\leq 1$.
Throughout this paper, $A\lesssim B$ abbreviate that there exists a constant $C>0$ independent 
of $\epsilon$, $h$ and $H$ ($h$ and $H$ will be defined later), such that 
$A\leq C B$, and let $A\gtrsim B$ be defined as $B\lesssim A$ 
and $A\approx B$ abbreviates $A\lesssim B\lesssim A$.
We assume that %$b\neq \begin{pmatrix}0&0\end{pmatrix}$ 
%as a constant vector with $|b|=1$ 
$\normI{b}{\Omega}\approx 1$. 
Let $\langle\bullet,\bullet\rangle_{H^{-1}(\Omega)\times H^1_0(\Omega)}$ 
denote the dual pairing of $H^{-1}(\Omega)$ and $H^1_0(\Omega)$.

We consider the variational form of~\eqref{eq:original}:
\begin{equation}\label{eqn:variationalForm}
\left\{
\begin{aligned}
&\text{find $u\in V$ such that for all }v\in V\\
&a(u,v)=\langle f,v\rangle_{H^{-1}(\Omega)\times H^1_0(\Omega)}.
\end{aligned}
\right.
\end{equation}
By virtue of the $V$-ellipticity and $V$-continuity of $a$ 
from~\eqref{eqn:v-elliptic} and~\eqref{eqn:continuitya} and 
the Lax-Milgram lemma, problem~\eqref{eqn:variationalForm} has a unique solution in $V$.

%Discretization.
Let $\tri_h$ be a shape-regular triangulation of the domain $\Omega$,
where $h$ represents the minimal diameter of all triangles in $\tri_h$. 
Given a triangulation $\tri$, let
\begin{align*}
{\mathcal{P}}_1(\tri)  &:= \{ {v} \in C^{0}(\Omega)\mid {v}|_K \in P^1(K)
\mbox{ for all } K \in \tri \},
\end{align*}
denote the space of piecewise linear finite elements and define 
$V_h:={\mathcal{P}}_1(\tri_h)\cap V$.

Let $u_h\in V_h$ denote the reference solution, which is defined as
the Galerkin approximation that satisfies
\begin{equation}\label{eqn:fine-scale}
   a(u_h,v_h)=\langle f,v_h\rangle_{H^{-1}(\Omega)\times H^1_0(\Omega)} 
      \qquad \text{for all }v_h\in V_h.
\end{equation}
Taking advantage of the ellipticity and continuity of $a$ 
from~\eqref{eqn:v-elliptic} and~\eqref{eqn:continuitya} on 
$V\times V\supset V_h\times V_h$, the Lax-Milgram lemma implies that 
the fine-scale solution $u_h$ 
of \eqref{eqn:fine-scale} exists and is unique on $V_h$. 

We assume that $\epsilon\ll 1$ is a small parameter and that $\tri_h$
resolves $\epsilon$ in the sense that $u_h$ is a good approximation of $u$,
e.g., if 
\begin{align}\label{eqn:meshsizeh}
  h_\mathrm{max}\|b\|_{L^\infty(\Omega)}/\epsilon\lesssim 1
\end{align}
with the maximal mesh-size $h_\mathrm{max}$ of $\tri_h$.
It holds that 
\begin{align*}
 \normHsemi{u-u_h}{\Omega}
   \lesssim\left(1+\frac{h_\mathrm{max}\|b\|_{L^\infty(\Omega)}}{\epsilon}\right) 
       \inf_{v_h\in V_h} \normHsemi{u-v_h}{\Omega}.
\end{align*}
If, in addition, the solution $u$ of~\eqref{eqn:variationalForm} 
satisfies $u\in H^2(\Omega)$,
standard interpolation estimates lead to
\begin{align*}
 \normHsemi{u-u_h}{\Omega}
     \lesssim h_\mathrm{max}\left(1+\frac{h_\mathrm{max}\|b\|_{L^\infty(\Omega)}}{\epsilon}\right) 
          \|u\|_{H^2(\Omega)},
\end{align*}
with a hidden constant independent of $\epsilon$. 
Note, however, that $\|u\|_{H^2(\Omega)}$ depends on $\epsilon$.
% This leads to 
% \begin{align*}
%   0<h\ll \epsilon 
% \end{align*}
% for a reliable reference solution.

\section{The ideal method}\label{s:idealMethod}

In this section we introduce a variational multiscale method based 
on the nodal interpolation, which 
yields a locally best-approximation of the reference solution $u_h\in V_h$
from~\eqref{eqn:fine-scale} and which is computed on a feasible coarse 
underlying mesh $\tri_H$.
We assume that $\tri_H$ is a regular quasi-uniform triangulation of 
the domain $\Omega$ with maximal mesh-size $H$, such that $\tri_h$ is a refinement of 
$\tri_H$. 
Let $\mathcal{N}_H$ denote the nodes in $\tri_H$ and $\mathrm{mid}_K$ the 
baricenter for each coarse element $K\in \tri_H$. 
The maximal mesh-size $H$ of $\tri_H$ represents a computationally feasible 
scale that is typically much larger than $\epsilon$. Altogether, the 
target regime is then 
\begin{align*}
  0<h< \epsilon \ll  H\lesssim 1.
\end{align*}

Define $V_H={\mathcal{P}}_1(\tri_H)\cap V$ and let
$I_H: V_h\rightarrow V_H$ denote the nodal interpolation. Note that 
$I_H$ acts only on finite element functions and is, hence, well defined. 
It holds,
\begin{align}\label{eqn:interpolation}
&H^{-1}\normL{v-I_Hv}{T}+\normHsemi{I_Hv}{T}
\leq \CI \normHsemi{v}{T}.%\\
% &\normL{I_Hv}{T}\leq C_{I_H}\normL{v}{T}.
\end{align}
% Note that $\CI$ depends on the ratio $\frac{H}{h}$. if the dimension of $\Omega$ is 
% larger than 1, which reflects that $I_H$ is not well defined for $H^1$ functions. 
% However, in two dimensions, the dependence is only moderate in the sense 
% that 
Indeed, we have \cite{Yserentant86}
\begin{align*}
  \CI\lesssim \begin{cases}
                1  & \quad\text{ in one dimension},\\
                \log{\frac{H}{h}} & \quad\text{ in two dimensions},\\
                \frac{H}{h} & \quad\text{ in three dimensions}.
              \end{cases}
\end{align*}

\begin{figure}%[htb!]
%\sidecaption[t]
% Use the relevant command for your figure-insertion program
% to insert the figure file.
% For example, with the graphicx style use
\includegraphics[width=.5\textwidth]{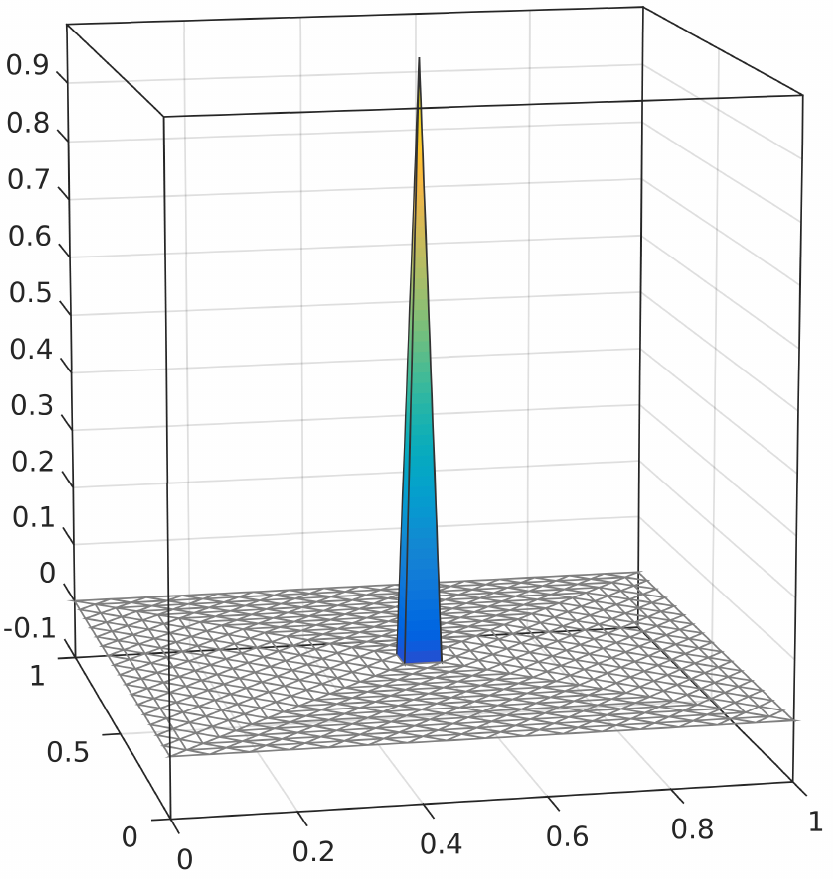}
\includegraphics[width=.5\textwidth]{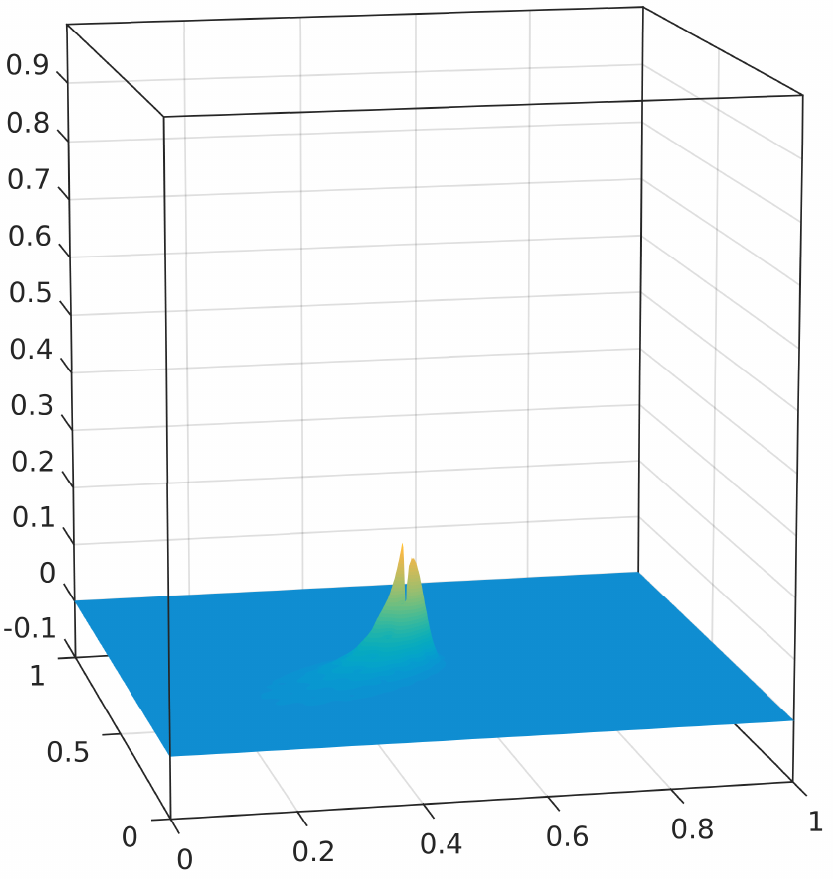}\\
\includegraphics[width=.5\textwidth]{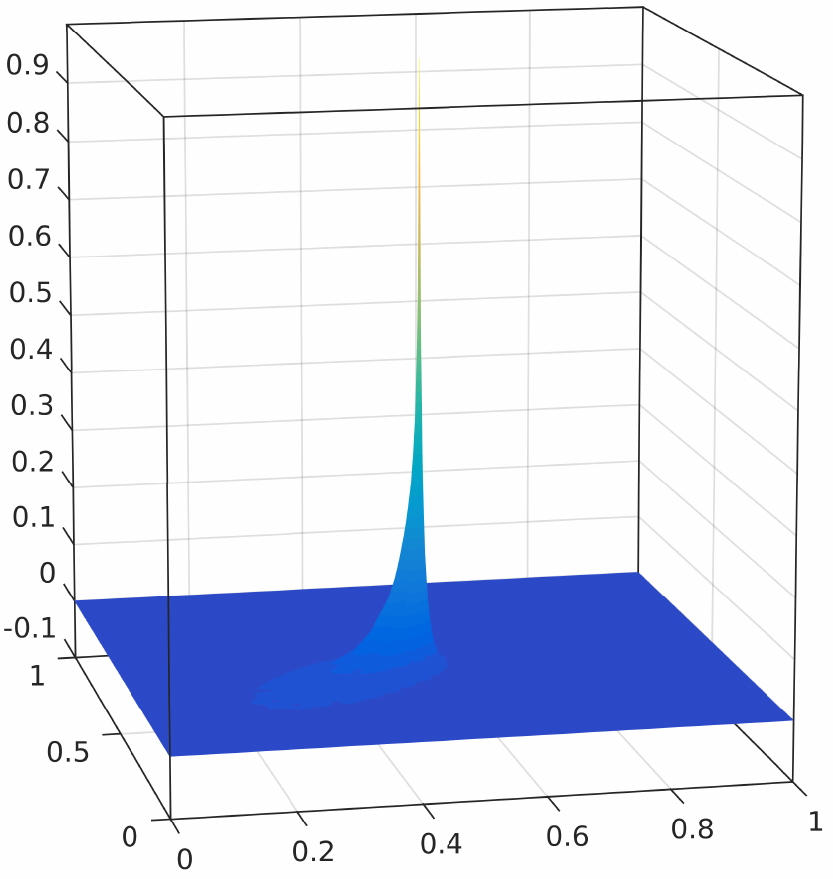}
\includegraphics[width=.5\textwidth]{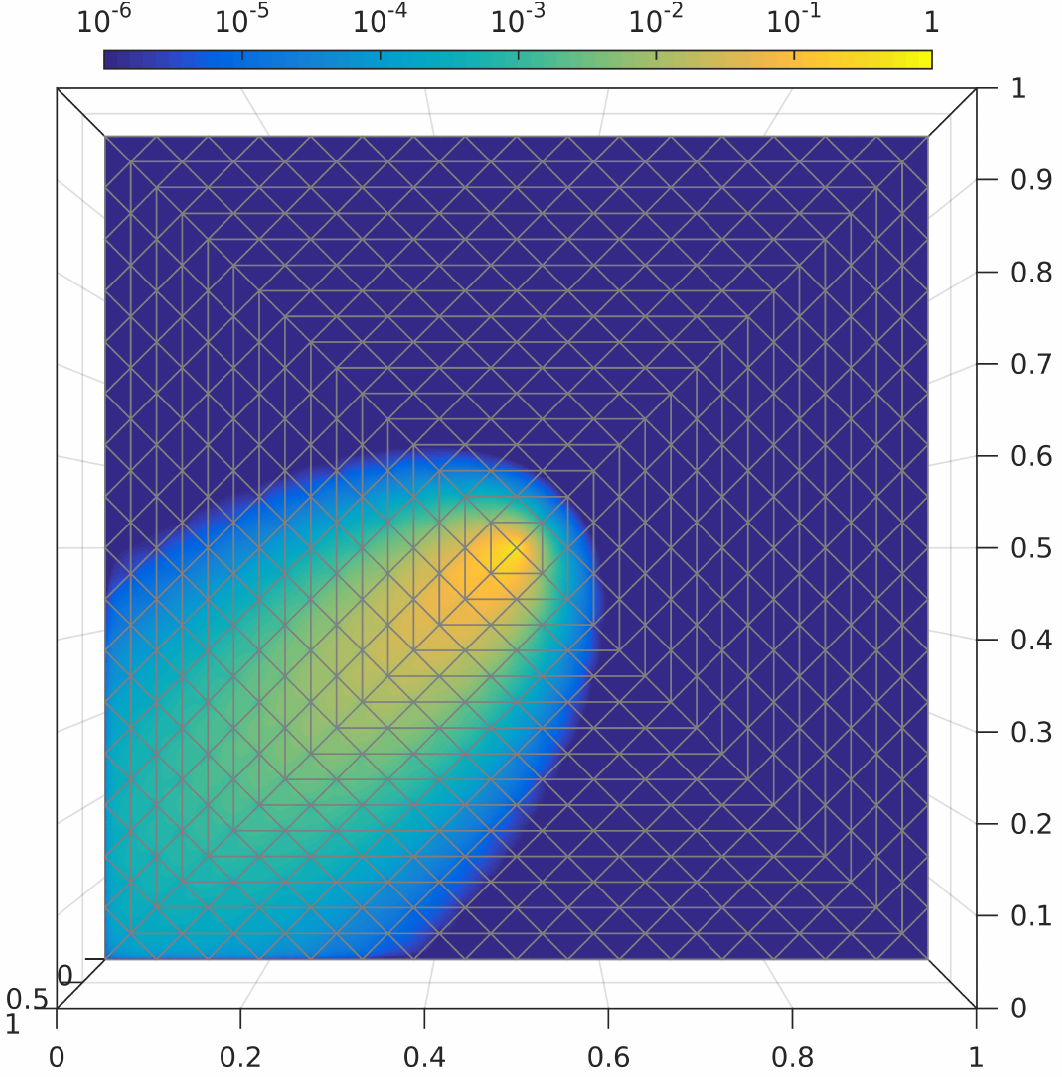}
%\includegraphics[height=.5\textwidth]{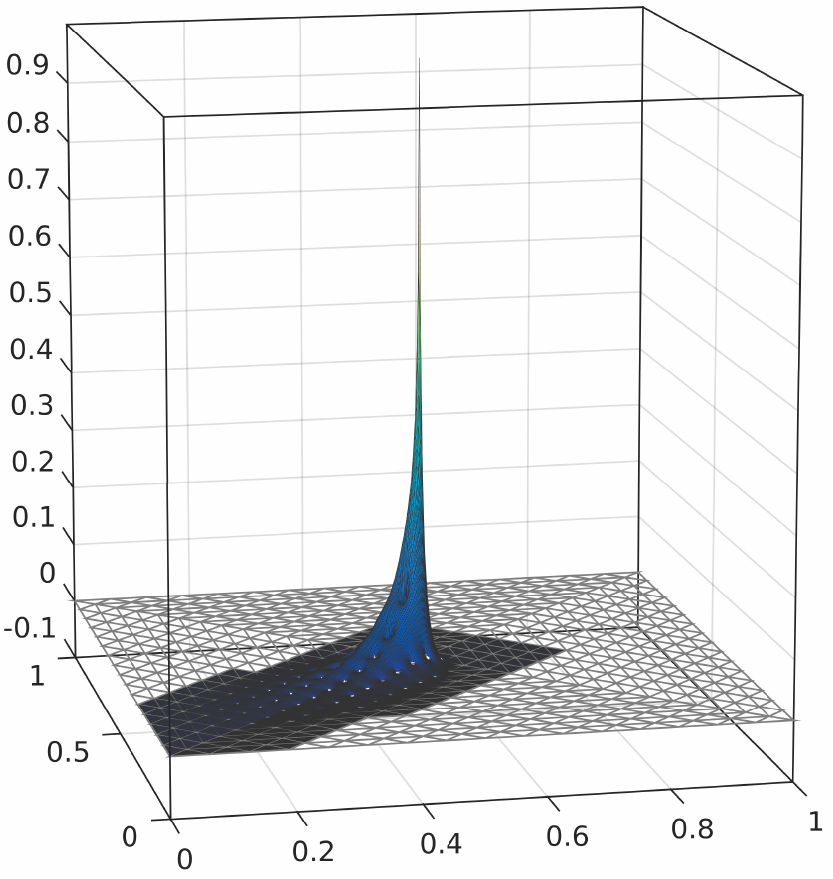}%
% If no graphics program available, insert a blank space i.e. use
%\picplace{5cm}{2cm} % Give the correct figure height and width in cm
%
\caption{Standard nodal basis function $\lambda_z$ with
respect to the coarse mesh $\tri_H$ (top left), corresponding ideal
corrector $\corrector  \lambda_z$ (top right), and corresponding
test basis function $(1-\corrector )\lambda_z$ (bottom left). 
The bottom right figure shows a top view on the modulus of test basis function $(1-\corrector)
\lambda_z$ with logarithmic color scale to illustrate the exponential decay
property. The underlying data is $b=[\cos(0.7),\sin(0.7)]$ and $\epsilon=2^{-7}$.}
\label{fig:testbasis_ideal}       % Give a unique label
\end{figure}

%basic property for the ideal method.
Given $v_H\in V_H$, 
define the subscale corrector $\corrector:V_H\to\mathrm{Ker}I_H$ by
\begin{equation}\label{eqn:corrector2}
a(w,\corrector v_H)=a(w,v_H)
 \qquad\text{for all }w\in \mathrm{Ker}I_H.
\end{equation}
The well-posedness of~\eqref{eqn:corrector2} follows from the ellipticity 
and continuity of $a$, since $\mathrm{Ker}I_H\subset V$.

Now we are ready to define the multiscale test space as
\begin{align*}
W_{H}:=(1-\corrector)V_H.
%W_{H}^{*}&=\{(1-\corrector^{*})\lambda_z, \forall z\in \mathcal{N}_H\}
\end{align*}
Note that~\eqref{eqn:corrector2} implies that
\[W_{H}=\{w\in V_h:\forall v\in \mathrm{Ker}I_H, a(v,w)=0\}.\]
The Petrov-Galerkin method for the approximation of \eqref{eqn:fine-scale} based on 
the trial-test pairing $(V_{H},W_{H})$ defined above seeks $u_{H}\in V_{H}$ satisfying
\begin{align}\label{eqn:ideal}
  a(u_{H},w_{H})=\langle f,w_{H}\rangle_{H^{-1}(\Omega)\times H^1_0(\Omega)}
    \qquad \text{for all }w_{H}\in W_{H}.
\end{align}
%
%error of the ideal method.
Note that~\eqref{eqn:ideal} is a variational characterization of $I_H$
in the sense that, for all $w_{H}\in W_{H}$, we have 
\[a(I_H u_h,w_{H})=a(I_H u_h-u_h,w_{H})+a(u_h,w_{H})=
(f,w_{H}),\]
where the last equality follows from~\eqref{eqn:corrector2}, 
\eqref{eqn:fine-scale} and the fact that $I_H u_h-u_h\in \mathrm{Ker}I_H$.
Since $\mathrm{dim}V_H=\mathrm{dim}W_H$, it follows that $u_H=I_H u_h\in V_H$ 
is the unique solution of~\eqref{eqn:ideal}
and the ideal method inherits favourable stability 
and approximation properties from the interpolation $I_H$.
To be more precise, we have the following proposition,
which follows directly from the identity $u_H=I_H u_h$ and~\eqref{eqn:interpolation}.

\begin{proposition}[Stability and local quasi-optimality of the ideal method]\label{prop:ideal}
For any $f\in H^{-1}(\Omega)$, the ideal Petrov-Galerkin method \eqref{eqn:ideal} 
admits a unique solution $u_H$ in the standard finite element space $V_H$. 
The method is stable in the sense that
\[\normHsemi{u_H}{\Omega}\leq \CI\normHsemi{u_h}{\Omega},\]
where $u_h\in V_h$ denotes the reference solution that solves \eqref{eqn:fine-scale}. 
Note that the constant $\CI$ is independent of $\epsilon$, but may 
depend on $H/h$.

Moreover, for any $T\in \tri_H$, we have the local best-approximation 
result
\[\normHsemi{u_h-u_H}{T}\leq \CI\min\limits_{v_H\in V_H}\normHsemi{u_h-v_H}{T}.\]
\end{proposition}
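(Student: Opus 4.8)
The whole statement collapses onto the single identity $u_H = I_H u_h$, which is essentially the content of the paragraph preceding the proposition, so my plan is to treat three steps in order: well-posedness of \eqref{eqn:ideal}, stability, and local quasi-optimality. \emph{Well-posedness.} For existence I would just re-examine the computation $a(I_H u_h, w_H) = a(I_H u_h - u_h, w_H) + a(u_h, w_H) = \langle f, w_H\rangle$, where the first term vanishes because $I_H u_h - u_h \in \mathrm{Ker}\,I_H$ while $w_H \in W_H$ is $a$-orthogonal to $\mathrm{Ker}\,I_H$ from the left (the characterisation of $W_H$ recorded after \eqref{eqn:corrector2}), and the second uses $w_H \in V_h$ together with \eqref{eqn:fine-scale}; hence $u_H := I_H u_h$ solves \eqref{eqn:ideal}. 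For uniqueness I would first observe that $1-\corrector\colon V_H \to W_H$ is a bijection — it is onto by the very definition $W_H = (1-\corrector)V_H$, and one-to-one because $I_H(1-\corrector)v_H = I_H v_H = v_H$ (using $\corrector v_H \in \mathrm{Ker}\,I_H$ and that $I_H$ is the identity on $V_H$). Thus $\dim W_H = \dim V_H$, \eqref{eqn:ideal} is a square linear system, and it suffices to show that $a(u_H, w_H) = 0$ for all $w_H \in W_H$ forces $u_H = 0$. Testing with $w_H = (1-\corrector)u_H$ and using that $u_H - (1-\corrector)u_H = \corrector u_H$ lies in $\mathrm{Ker}\,I_H$ and is therefore $a$-orthogonal to $W_H$ from the left, one gets $a((1-\corrector)u_H, (1-\corrector)u_H) = 0$; the $V$-ellipticity \eqref{eqn:v-elliptic} then yields $(1-\corrector)u_H = 0$, and applying $I_H$ gives $u_H = 0$. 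So $u_H = I_H u_h$ is the unique solution.

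\emph{Stability and local quasi-optimality.} With $u_H = I_H u_h$ in hand, stability is immediate: summing the interpolation estimate \eqref{eqn:interpolation} over all $T \in \tri_H$ gives $\normHsemi{u_H}{\Omega} = \normHsemi{I_H u_h}{\Omega} \le \CI\normHsemi{u_h}{\Omega}$. For the local estimate, fix $T \in \tri_H$ and an arbitrary $v_H \in V_H$; since $I_H v_H = v_H$ one may write $u_h - u_H = u_h - I_H u_h = (u_h - v_H) - I_H(u_h - v_H)$, so the triangle inequality and \eqref{eqn:interpolation} give $\normHsemi{u_h - u_H}{T} \le (1+\CI)\normHsemi{u_h - v_H}{T}$; taking the infimum over $v_H$ and absorbing the harmless factor $1+\CI \lesssim \CI$ (recall $\CI \gtrsim 1$) into the constant yields the asserted bound.

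I do not expect a real obstacle here: the analytically substantive facts, namely the well-posedness of the corrector problem \eqref{eqn:corrector2} and the interpolation bound \eqref{eqn:interpolation}, are already available, and the remaining work is elementary. The only point that deserves a moment's care is the dimension count in the first step — verifying that $1-\corrector$ is injective on $V_H$ — since this is exactly what upgrades "\eqref{eqn:ideal} is solvable" to "\eqref{eqn:ideal} is a well-posed square system with a unique solution", and without it the Petrov--Galerkin formulation would not be justified.
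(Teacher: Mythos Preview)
Your proof is correct and matches the paper's approach, which simply states that the proposition ``follows directly from the identity $u_H=I_H u_h$ and~\eqref{eqn:interpolation}''; you have just supplied the details the paper omits (including the injectivity of $1-\corrector$ that justifies $\dim V_H=\dim W_H$). The constant $1+\CI$ versus $\CI$ in the local estimate is cosmetic --- \eqref{eqn:interpolation} as written controls $\normHsemi{I_H v}{T}$ rather than $\normHsemi{(1-I_H)v}{T}$, so the paper's one-line justification carries the same slack.
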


\begin{remark}
The stability and quasi-optimality of Proposition~\ref{prop:ideal} also 
holds for any other norm in which $I_H$ is stable.
\end{remark}

We admit that the corrector problems~\eqref{eqn:corrector2} are global problems 
on the fine triangulation $\tri_h$ which have to be precomputed for the 
solving of~\eqref{eqn:ideal}. This would result in a number of 
$\mathrm{dim}(V_H)$ problems of dimension $O(\mathrm{dim}(V_h))$ which is 
comparable of solving the original problem \eqref{eq:original} 
on a fine grid by an efficient standard method.
This makes the VMS~\eqref{eqn:ideal} not realistic.
However, it can be observed in Figure~\ref{fig:testbasis_ideal} that the 
corrector of functions with local support are still quasi-local in the sense 
that they decay exponentially. This allows for an approximation of 
the corrector by functions of local support.
In the next section, the exponential decay will be made rigorous, while 
Section~\ref{s:lod} proves stability and approximation properties 
for a localization strategy.

We end this section with a proof of the stability in the classical inf-sup sense,
although the method is perfectly stable in the sense of Proposition~\ref{prop:ideal}.
This result will be used in Section~\ref{s:lod} to prove well posedness of the 
localized version of~\eqref{eqn:ideal}.

\begin{lemma}[Stability]\label{lemma:infsup}
The trial-test pairing $(V_{H},W_{H})$ satisfies the inf-sup condition 
\begin{align}\label{eqn:stability}
\inf\limits_{w_{H}\in W_{H}\setminus\{0\}}
     \sup\limits_{u_{H}\in V_{H}\setminus\{0\}}
            \frac{a(u_{H},w_{H} )}{\normHsemi{u_{H}}{\Omega}\normHsemi{w_{H}}{\Omega}}
     \geq  \frac{\epsilon}{\CI}.
\end{align}
\end{lemma}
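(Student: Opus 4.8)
The plan is to exploit the explicit knowledge of the solution to the Petrov--Galerkin problem, namely $u_H = I_H u_h$, combined with the stability estimate $\normHsemi{I_H v}{\Omega} \leq \CI \normHsemi{v}{\Omega}$ from \eqref{eqn:interpolation}. The key observation is that the inf-sup quantity can be bounded below by testing, for a given $w_H \in W_H$, against a \emph{cleverly chosen} $u_H \in V_H$ rather than taking the true supremum. Since $w_H = (1-\corrector)v_H$ for a unique $v_H \in V_H$, and since $w_H - v_H = -\corrector v_H \in \mathrm{Ker}\, I_H$, the natural candidate is $u_H = v_H = I_H w_H$; with this choice $a(u_H, w_H)$ should reduce to something controlled below by $\epsilon\,\normHsemi{w_H}{\Omega}^2$ up to interpolation constants.

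The steps I would carry out are: (1) Fix $w_H \in W_H \setminus \{0\}$ and write $w_H = (1-\corrector)v_H$ with $v_H \in V_H$; note $I_H w_H = v_H$ since $\corrector v_H \in \mathrm{Ker}\, I_H$. (2) Choose the test function $u_H := v_H = I_H w_H \in V_H$, which is nonzero because $w_H \neq 0$ and $I_H$ is injective on $W_H$ (indeed $W_H$ and $V_H$ have the same dimension and $I_H|_{W_H}$ is the inverse of $(1-\corrector)$). (3) Compute $a(u_H, w_H) = a(v_H, (1-\corrector)v_H)$. Using the defining property \eqref{eqn:corrector2} of the corrector with test function $w = \corrector v_H \in \mathrm{Ker}\, I_H$, we get $a(\corrector v_H, (1-\corrector)v_H) = 0$, hence $a(v_H,(1-\corrector)v_H) = a((1-\corrector)v_H,(1-\corrector)v_H) = a(w_H, w_H)$. (4) Apply the $V$-ellipticity \eqref{eqn:v-elliptic}: $a(w_H,w_H) = \epsilon\,\normHsemi{w_H}{\Omega}^2$. (5) Bound $\normHsemi{u_H}{\Omega} = \normHsemi{I_H w_H}{\Omega} \leq \CI\,\normHsemi{w_H}{\Omega}$ by \eqref{eqn:interpolation}. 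Assembling, $\frac{a(u_H,w_H)}{\normHsemi{u_H}{\Omega}\normHsemi{w_H}{\Omega}} \geq \frac{\epsilon\,\normHsemi{w_H}{\Omega}^2}{\CI\,\normHsemi{w_H}{\Omega}^2} = \frac{\epsilon}{\CI}$, which gives \eqref{eqn:stability}.

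The only point requiring a little care — and the closest thing to an obstacle — is step (2), verifying that $u_H = I_H w_H$ is genuinely nonzero and that $I_H$ restricted to $W_H$ is a bijection onto $V_H$; this follows from the dimension count $\dim W_H = \dim V_H$ already used in the text and from the fact that $I_H (1-\corrector) v_H = v_H$ shows $(1-\corrector)$ is injective, hence bijective, with inverse $I_H|_{W_H}$. Everything else is a direct substitution using \eqref{eqn:corrector2}, \eqref{eqn:v-elliptic}, and \eqref{eqn:interpolation}, so the proof is short. Note the argument in fact shows the supremum is attained and the inf-sup constant is exactly pinned down by the interpolation stability constant, consistent with Proposition~\ref{prop:ideal}.
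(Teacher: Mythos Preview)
Your proposal is correct and follows essentially the same route as the paper: for a given $w_H\in W_H$ you take $u_H=I_H w_H$, use the corrector orthogonality to reduce $a(u_H,w_H)$ to $a(w_H,w_H)=\epsilon\,\normHsemi{w_H}{\Omega}^2$, and conclude with the stability bound~\eqref{eqn:interpolation}. The paper's version is slightly terser because it invokes the characterization $W_H=\{w\in V_h:\forall v\in\mathrm{Ker}\,I_H,\ a(v,w)=0\}$ directly rather than writing out $w_H=(1-\corrector)v_H$, but the argument is the same.
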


\begin{proof}
Given $w_{H}\in W_{H}$, take $u_H=I_H(w_{H})\in V_H$. Then by
\eqref{eqn:interpolation},
\begin{align}\label{eqn:14}
\normHsemi{u_H}{\Omega}\leq \CI\normHsemi{w_{H}}{\Omega}.
\end{align}
Note that $I_H(w_{H})-w_{H}\in \mathrm{Ker}I_H$. By~\eqref{eqn:corrector2}, we have
\begin{align}
  a(u_{H},w_{H})=a(I_H(w_{H}),w_{H})=a(w_{H},w_{H})
    =\epsilon \normHsemi{w_{H}}{\Omega}^2,
\end{align}
where the last inequality follows from $\nabla\cdot b=0$.

We obtain the result by the application of \eqref{eqn:14}.
\end{proof}

\section{Exponential decay of element correctors}\label{s:exponentialdecay}

This section is devoted to the proof of the exponential decay of element 
correctors defined in the following. 
Given $\omega\subset\Omega$, define the local bilinear form
\begin{align}\label{eqn:defaloc}
 a_\omega(u,v):=\epsilon \int_\omega \nabla u\cdot\nabla v\,dx 
    + \int_\omega (b\cdot \nabla u)\, v\,dx 
    \qquad\text{for all }u,v\in V
\end{align}
and let the local corrector $\corrector_{T}:V_H\to \mathrm{Ker}I_H$ 
be defined for any $v_H\in V_H$ by 
\begin{align}\label{eqn:defloccor}
  a(w,\corrector_{T}v_H) = a_T(w,v_H)
  \qquad\text{for all }w\in\mathrm{Ker}I_H.
\end{align}
Note that $\corrector =\sum_{T\in\tri_H} \corrector_{T}$
holds for the corrector $\corrector $ defined by~\eqref{eqn:corrector2}.

We consider the case that $\epsilon\leq H$.
In the following we restrict ourselves to a constant vector field $b$ 
and w.l.o.g.\ $|b|=1$; see Remark~\ref{remark:varb} below for a discussion 
for non-constant vector fields $b$. 
Define $t$ as a unit vector in $\mathbb{R}^{2}$, s.t. $t\cdot b=0$. Define a rectangle 
$S_{T,\ell,b}$ for each $T\in \tri_H$ and $\ell\in \mathbb{N}_{+}$ by 
\begin{equation}\label{eqn:defStlb}
\begin{aligned}
 S_{T,\ell,b}&:= \Omega\cap \operatorname{conv}\{\mathrm{mid}_T-\ell Ht+\ell H b,
      \mathrm{mid}_T+\ell Ht+\ell H b, \\
  &\qquad\qquad\qquad\qquad
      \mathrm{mid}_T-\ell Ht-\ell H^2 b/\epsilon,
      \mathrm{mid}_T+\ell Ht-\ell H^2b/\epsilon\}.
\end{aligned}
\end{equation}
We do not assume that $b$ is aligned with the triangulation and therefore 
we define the patches $\Omega_{T,\ell,b}$ by
\begin{equation*}
\begin{aligned}
  \Omega_{T,\ell,b} := \cup \{T'\in\tri_H\mid T'\cap S_{T,\ell,b}\neq \emptyset\}
    \supset S_{T,\ell,b}.
\end{aligned}
\end{equation*}
See Figure~\ref{fig:patches} for an illustration.
For fixed $\ell \in \mathbb{N}_{+}$, the element patches have finite overlap in the 
sense that there exists a constant $\Col>0$, s.t., 
\begin{align}\label{eqn:overlap}
\max_{K\in\tri_H} \#\{T\in\tri_H\mid K\subset \Omega_{T,\ell,b}\}\leq \Col.
\end{align}

% The proof of the exponential decay property from the local correctors 
% from Theorem~\ref{thm:decay} relies on the following properties of the 
% patches $\Omega_{T,\ell,b}$.
% 
% \begin{remark}
% The patches $\Omega_{T,\ell,b}$ satisfy
% \begin{enumerate}
% %\item 1
% \item $S_{T,\ell,b}\subset\Omega_{T,\ell,b}$, for $\ell\in \mathcal{N}$;
% \item $\Omega_{T,\ell-1,b}\subset  S_{T,\ell,b}$, for $2<\ell\in \mathbb{N}$.
% \end{enumerate}
% \end{remark}

\begin{figure}[htb!]
\includegraphics[width=0.325\textwidth]{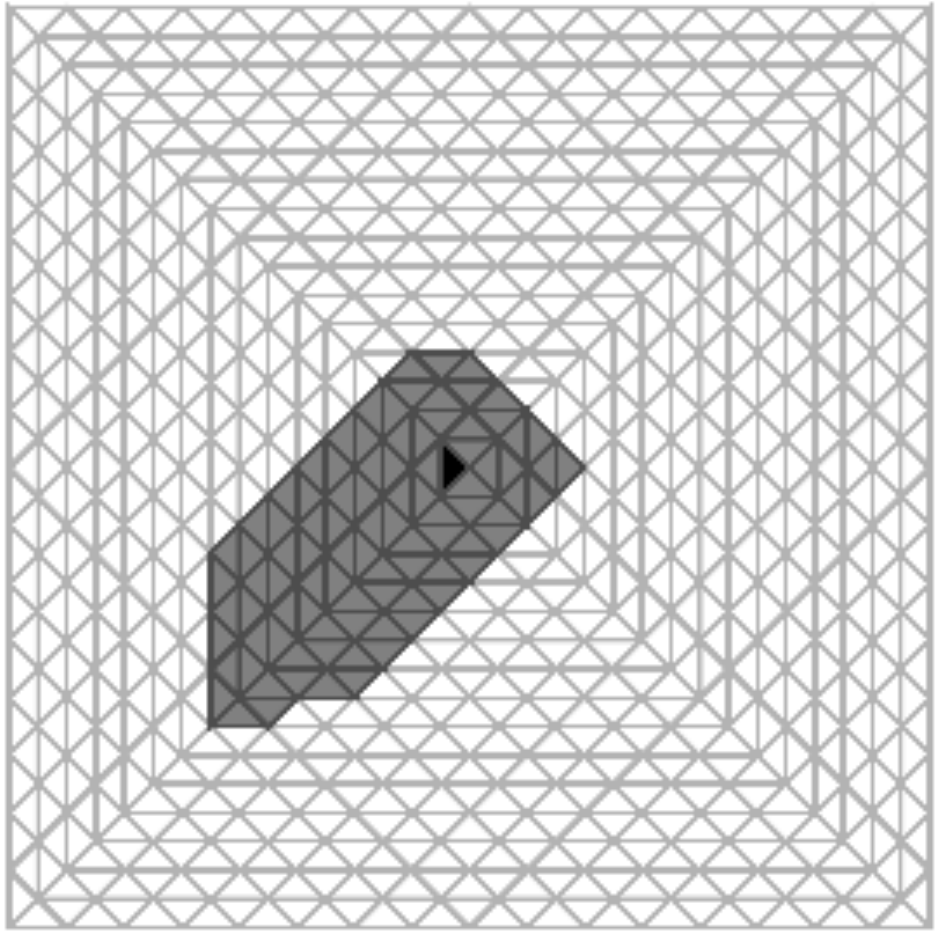}
\includegraphics[width=0.325\textwidth]{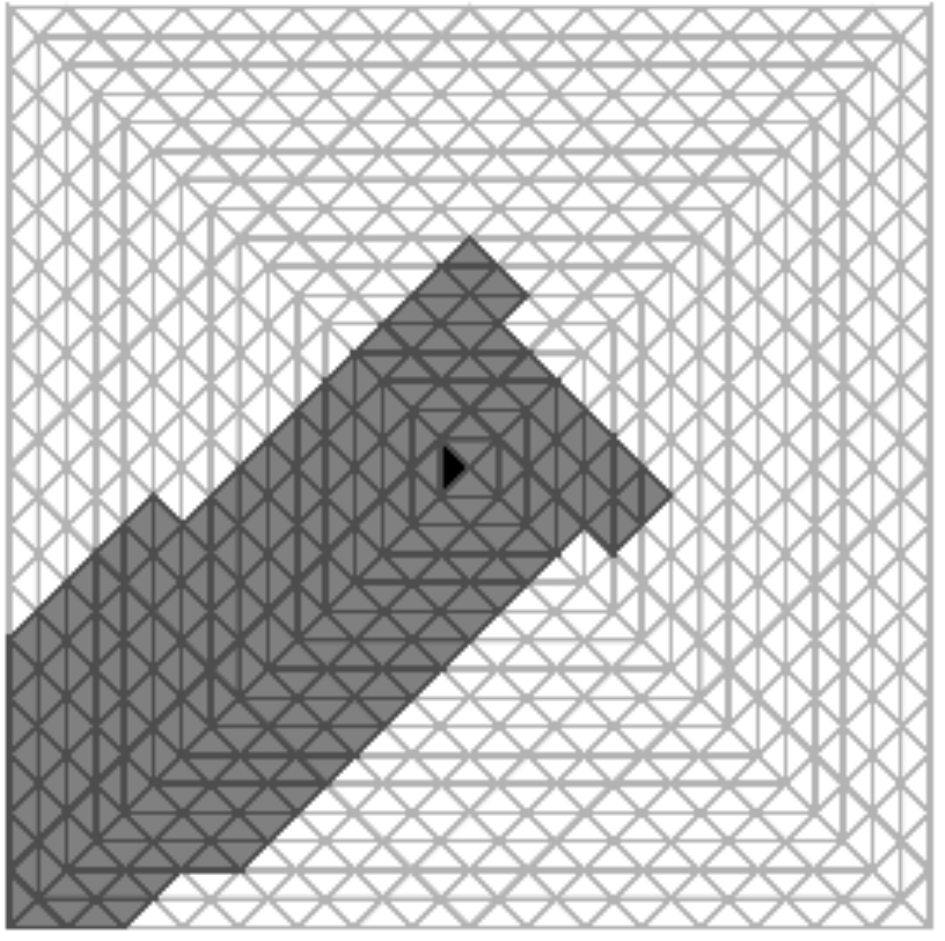}
\includegraphics[width=0.325\textwidth]{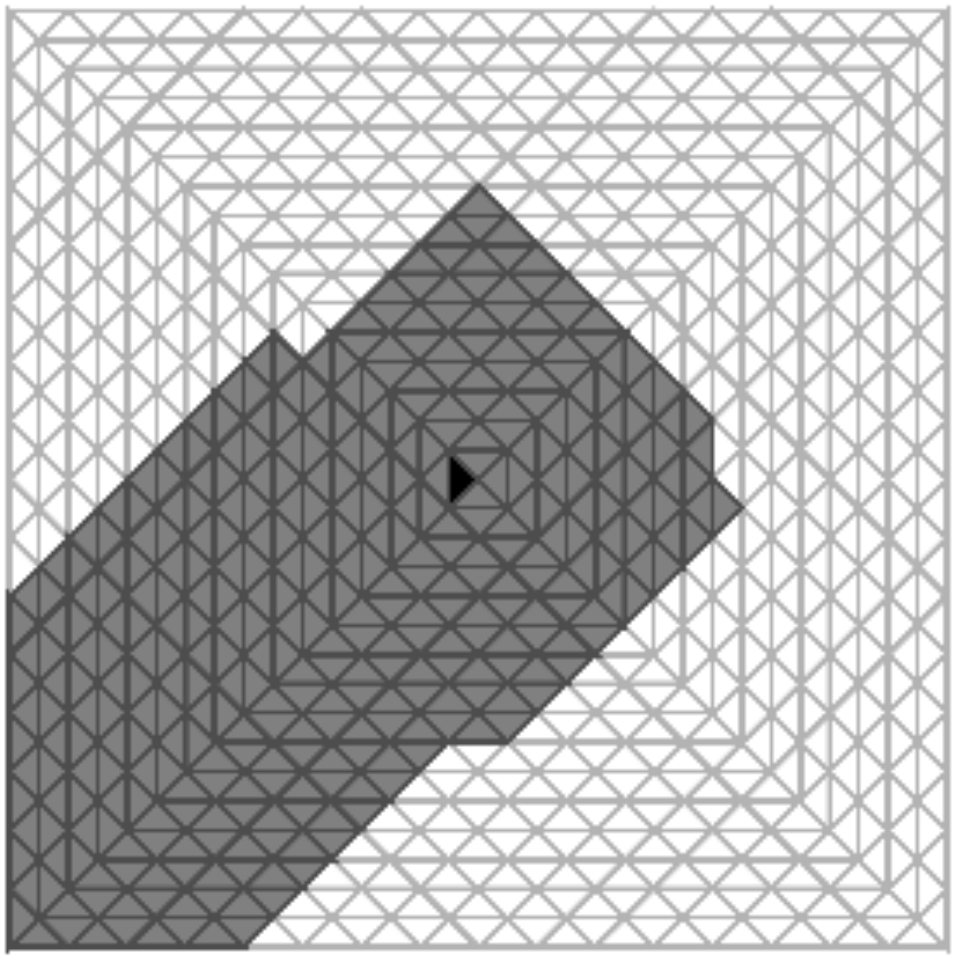}
\caption{Element patches $\Omega_{T,\ell,b}$ for $b=[\cos(0.7),\sin(0.7)]$, 
$\epsilon=2^{-7}$ and $\ell=1,2,3$ (from left to right) as they are used in the localised 
corrector problem \eqref{eqn:corrector1_local}.\label{fig:patches}}
\end{figure}

\begin{theorem}\label{thm:decay}
Let $T\in \tri_H$
and $v_H\in V_H$ and let $\corrector_{T} v_H$ denote the corresponding 
local subscale 
corrector as defined in~\eqref{eqn:defloccor}. 
Then we have
\begin{align}
\normHsemi{\corrector_{T} v_H}{\Omega\setminus S_{T,\ell,b}}
        \lesssim \beta^{\ell}\normHsemi{\corrector_{T} v_H}{\Omega}.
\end{align}
The constant $\beta$ reads 
\begin{align}\label{eqn:defbeta}
 \beta = \left(\frac{4\CI+3\CI^2}{1+4\CI+3\CI^2}\right)^{1/2}<1
\end{align}
and is bounded away from 1.
\end{theorem}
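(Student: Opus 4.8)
The plan is to run the standard LOD exponential-decay argument (in the spirit of~\cite{Mlqvist.Peterseim:2011, Peterseim:2015}) with cutoff functions adapted to the anisotropic patches $S_{T,\ell,b}$. Write $\phi:=\corrector_{T}v_H$; the one structural fact I would exploit is that, by~\eqref{eqn:defloccor}, $\phi\in\mathrm{Ker}I_H$ is discretely $a$-harmonic away from $T$, in the sense that $a(w,\phi)=a_T(w,v_H)=0$ whenever $w\in\mathrm{Ker}I_H$ and $\operatorname{supp}w\cap T=\emptyset$. First I would reduce the claim to the single-step estimate
\begin{equation*}
\normHsemi{\phi}{\Omega\setminus S_{T,\ell,b}}^{2}\le(4\CI+3\CI^{2})\,\normHsemi{\phi}{S_{T,\ell,b}\setminus S_{T,\ell-1,b}}^{2},
\end{equation*}
valid once $\ell$ is large enough that $T$ lies interior to $S_{T,\ell-1,b}$. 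Because $S_{T,\ell-1,b}\subset S_{T,\ell,b}$, the ring on the right equals $(\Omega\setminus S_{T,\ell-1,b})\setminus(\Omega\setminus S_{T,\ell,b})$, so with $b_\ell:=\normHsemi{\phi}{\Omega\setminus S_{T,\ell,b}}^{2}$ and $C:=4\CI+3\CI^{2}$ the estimate reads $b_\ell\le C(b_{\ell-1}-b_\ell)$, i.e.\ $b_\ell\le\tfrac{C}{1+C}\,b_{\ell-1}$, and iterating gives $b_\ell\le\beta^{2\ell}b_0$ with $\beta$ exactly as in~\eqref{eqn:defbeta}. The finitely many small values of $\ell$ are absorbed into the hidden constant via the trivial bound $b_\ell\le b_0$.

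For the single-step estimate one takes a Lipschitz cutoff $\eta$ with $0\le\eta\le1$, $\eta\equiv0$ on $S_{T,\ell-1,b}$ and $\eta\equiv1$ on $\Omega\setminus S_{T,\ell,b}$, built to respect the geometry of $S_{T,\ell,b}$: across the two faces parallel to $b$ and across the short $+b$ face the transition spans a length $\approx H$, whereas across the long $-b$ face, which is separated from the corresponding face of $S_{T,\ell-1,b}$ by a distance $\approx H^{2}/\epsilon$, it spans a length $\approx H^{2}/\epsilon$ (a standard modification replaces $\eta\phi$ by its $\tri_h$-interpolant to stay in $V_h$). Since $\nabla(\eta\phi)=\nabla\phi$ on $\Omega\setminus S_{T,\ell,b}$, the $V$-ellipticity~\eqref{eqn:v-elliptic} yields $\epsilon\normHsemi{\phi}{\Omega\setminus S_{T,\ell,b}}^{2}\le\epsilon\normHsemi{\eta\phi}{\Omega}^{2}=a(\eta\phi,\eta\phi)$. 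Splitting $\eta\phi=\phi-(1-\eta)\phi$ in the test slot and $\eta\phi=I_H(\eta\phi)+(1-I_H)(\eta\phi)$ in the trial slot, and using that $(1-I_H)(\eta\phi)\in\mathrm{Ker}I_H$ is supported in a one-coarse-layer neighbourhood of the ring, so that $a\big((1-I_H)(\eta\phi),\phi\big)=0$, one arrives at
\begin{equation*}
\epsilon\normHsemi{\phi}{\Omega\setminus S_{T,\ell,b}}^{2}\le a\big(I_H(\eta\phi),\phi\big)-a_{S_{T,\ell,b}\setminus S_{T,\ell-1,b}}\big(\eta\phi,(1-\eta)\phi\big).
\end{equation*}
The diffusion parts of the two terms on the right are controlled by Cauchy--Schwarz, the stability and approximation bound~\eqref{eqn:interpolation}, the finite overlap~\eqref{eqn:overlap}, and the Poincar\'e-type inequality $\normL{\phi}{T'}\le\CI H\normHsemi{\phi}{T'}$ for $\phi\in\mathrm{Ker}I_H$; these produce the $\CI$- and $\CI^{2}$-contributions to $C$.

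The crux, and the reason for the anisotropy, is the convective part. Integrating by parts and using $\nabla\cdot b=0$, all convective contributions reduce either to terms of the form $\tfrac12\int_{S_{T,\ell,b}\setminus S_{T,\ell-1,b}}(b\cdot\nabla\eta)\,\phi^{2}$ or to terms that the Galerkin orthogonality $a\big((1-I_H)(\cdot),\phi\big)=0$ converts into $\mathcal{O}(\epsilon)$ diffusion integrals over the fattened ring. For the first type one exploits that $b\cdot\nabla\eta=0$ on the faces of $S_{T,\ell,b}$ parallel to $b$ — so the large transverse derivative $|\nabla\eta|\approx1/H$ never meets the convection field — while $|b\cdot\nabla\eta|\lesssim\epsilon/H^{2}$ on the long $-b$ face by construction; hence $\big|\tfrac12\int(b\cdot\nabla\eta)\phi^{2}\big|\lesssim(\epsilon/H^{2})\normL{\phi}{S_{T,\ell,b}\setminus S_{T,\ell-1,b}}^{2}\lesssim\epsilon\,\CI^{2}\normHsemi{\phi}{S_{T,\ell,b}\setminus S_{T,\ell-1,b}}^{2}$, so that after dividing by $\epsilon$ the constant is $\epsilon$-independent. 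In other words, the downwind elongation of $S_{T,\ell,b}$ by $\ell H^{2}/\epsilon$ is calibrated precisely against the mesh P\'eclet number, which is what keeps $\beta$ bounded away from $1$ uniformly in $\epsilon$. I expect the delicate points to be (i) the convective terms on the short $+b$ end of the ring, where $|b\cdot\nabla\eta|\approx1/H$ and a complementary streamwise fast-decay argument (or a suitably enlarged cutoff) is needed, and (ii) the bookkeeping of the overlapping fattened rings that sharpens the constant to exactly $4\CI+3\CI^{2}$; granting these, the telescoping is routine.
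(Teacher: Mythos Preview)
Your overall plan---cutoff adapted to $S_{T,\ell,b}$, a single-step ring estimate, then telescoping---is exactly the paper's strategy, and your reduction to $b_\ell\le\tfrac{C}{1+C}b_{\ell-1}$ is correct. Two remarks.

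First, your $I_H$-splitting in the trial slot is vacuous: since $\phi\in\mathrm{Ker}I_H$ vanishes at every coarse node and $\eta$ is bounded, $\eta\phi$ (or its $\tri_h$-interpolant) also vanishes at every coarse node, so $I_H(\eta\phi)=0$. The paper avoids this detour by testing the corrector equation directly with $\eta^{2}\phi\in\mathrm{Ker}I_H$; since $\eta$ vanishes on $S_{T,\ell-1,b}\supset T$, the right-hand side $a_T(\eta^{2}\phi,v_H)$ is zero. After the algebra $a(\eta\phi,\eta\phi)=a(\eta^{2}\phi,\phi)-\epsilon(\nabla\phi,\eta\phi\nabla\eta)+\epsilon(\nabla\eta,\phi\nabla(\eta\phi))-(b\cdot\nabla\eta,\eta\phi^{2})$, there are exactly three residual terms, and bounding them with~\eqref{eqn:interpolation} and $\|\nabla\eta\|_{L^\infty}\le 2/H$ yields the precise constant $4\CI+3\CI^{2}$.

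Second, and more importantly, your ``delicate point (i)'' is not delicate at all---it is resolved by a sign, and this is the heart of the argument. You bound the convective remainder via $|b\cdot\nabla\eta|$, which is indeed $\approx 1/H$ on the short $+b$ strip and would be fatal. But the term actually appearing is $-(b\cdot\nabla\eta,\eta\phi^{2})$, and only the \emph{one-sided} bound $-b\cdot\nabla\eta\le\epsilon/H^{2}$ is needed, since $\eta\phi^{2}\ge 0$. Write $\eta=1-\eta_1\eta_2$ with $\eta_1$ the transverse cutoff and $\eta_2$ the streamwise one; then $b\cdot\nabla\eta_1=0$ and $-b\cdot\nabla\eta=(b\cdot\nabla\eta_2)\eta_1$. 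On the $+b$ strip, $\eta_2$ decreases in the $b$-direction (from $1$ to $0$), so $b\cdot\nabla\eta_2\le 0$ and the contribution is nonpositive and can be dropped. Only the long $-b$ strip contributes, and there $b\cdot\nabla\eta_2=\epsilon/H^{2}$ by construction. No streamwise fast-decay argument and no enlarged cutoff are needed; the asymmetry of $S_{T,\ell,b}$ together with this sign observation is the whole point.
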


Before going to the proof of this theorem, we express the exponential 
decay in terms of patches in the following corollary. This is a direct consequence 
of Theorem~\ref{thm:decay} and the definition of $\Omega_{T,\ell,b}$.

\begin{corollary}
Let $T\in \tri_H$
and $v_H\in V_H$ and let $\corrector_{T} v_H$ denote the corresponding 
local subscale 
corrector as defined in~\eqref{eqn:defloccor}. 
Then we have
\begin{align}
\normHsemi{\corrector_{T} v_H}{\Omega\setminus \Omega_{T,\ell,b}}
\leq \normHsemi{\corrector_{T} v_H}{\Omega\setminus S_{T,\ell,b}}
        \lesssim \beta^{\ell}\normHsemi{\corrector_{T} v_H}{\Omega}
\end{align}
with $\beta<1$ from \eqref{eqn:defbeta}.
\end{corollary}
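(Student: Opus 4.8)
The plan is to use the standard Caccioppoli-type / iterated-cutoff argument from the LOD literature, adapted to the convection-diffusion bilinear form and to the anisotropic ``slab'' geometry encoded in $S_{T,\ell,b}$. The key point is that $\corrector_T v_H$ vanishes tested against $\mathrm{Ker}\,I_H$ in the region away from $T$ (since $a_T$ only sees $T$), so on $\Omega\setminus T$ the corrector is ``$a$-harmonic modulo $I_H$''. First I would fix $\ell$ and introduce a Lipschitz cutoff function $\eta\colon\Omega\to[0,1]$ that is $0$ on the smaller slab $S_{T,\ell-1,b}$ (or a suitable inner region) and $1$ on $\Omega\setminus S_{T,\ell,b}$, with $\eta$ varying only across the one-layer-thick ring $R_\ell := S_{T,\ell,b}\setminus S_{T,\ell-1,b}$. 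The geometry of $S_{T,\ell,b}$ in \eqref{eqn:defStlb} is chosen precisely so that the gradient of $\eta$ can be taken of size $\lesssim H^{-1}$ in the directions transverse to $b$ and of size $\lesssim \epsilon/H^2$ (i.e.\ $\lesssim(\epsilon/H)\cdot H^{-1}$) in the streamline direction on the upwind side — this is what will make the convection term harmless.

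The core computation is as follows. Write $\phi := \corrector_T v_H$ and abbreviate $\Omega_\ell := \Omega\setminus S_{T,\ell,b}$. Using $a(w,\phi)=0$ for all $w\in\mathrm{Ker}\,I_H$ away from $T$, the standard trick is to test with $w := (1-I_H)(\eta^2\phi)\in\mathrm{Ker}\,I_H$ (or $w=\eta^2\phi$ corrected by $I_H$ to land in the kernel), which produces $\epsilon\normHsemi{\phi}{\Omega_\ell}^2$ on one side and, on the other side, commutator terms supported on the ring $R_\ell$ where $\nabla\eta\neq0$. These terms are of three types: (i) the diffusion commutator $\epsilon\int \nabla\eta\cdot(\cdots)\,\phi$, controlled by $\epsilon H^{-1}\normL{\phi}{R_\ell}\normHsemi{\phi}{R_\ell}$ and then by Young's inequality plus the interpolation estimate \eqref{eqn:interpolation} (which contributes the $\CI$ factors); (ii) the interpolation-error commutator from replacing $\eta^2\phi$ by $(1-I_H)(\eta^2\phi)$, again controlled via \eqref{eqn:interpolation} on each element of the ring, giving the $\CI$ and $\CI^2$ terms; and (iii) the convection commutator $\int (b\cdot\nabla\eta)\,\eta\,\phi^2$, which by incompressibility and the choice $|\nabla\eta\cdot b|\lesssim \epsilon/H^2$ on the relevant part of $R_\ell$ is also bounded by $\epsilon H^{-1}\normL{\phi}{R_\ell}\,\|\phi\|_{L^2(R_\ell)}$, hence absorbable on the same footing as (i) after a Poincaré/interpolation step (this uses $\epsilon\le H$). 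Collecting, one arrives at an estimate of the shape
\[
\normHsemi{\phi}{\Omega_\ell}^2 \;\le\; (4\CI+3\CI^2)\,\normHsemi{\phi}{R_\ell}^2
\;=\; (4\CI+3\CI^2)\big(\normHsemi{\phi}{\Omega_{\ell-1}}^2-\normHsemi{\phi}{\Omega_\ell}^2\big),
\]
where $\Omega_{\ell-1}:=\Omega\setminus S_{T,\ell-1,b}$ and I have used $R_\ell\subset\Omega_{\ell-1}$. Rearranging gives $\normHsemi{\phi}{\Omega_\ell}^2\le \beta^2\,\normHsemi{\phi}{\Omega_{\ell-1}}^2$ with $\beta^2 = \tfrac{4\CI+3\CI^2}{1+4\CI+3\CI^2}$ exactly as in \eqref{eqn:defbeta}.

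Finally, I would iterate this one-step contraction from $\ell$ down to $1$ (adjusting indices so that each step strips off one layer), obtaining $\normHsemi{\phi}{\Omega\setminus S_{T,\ell,b}}\lesssim\beta^{\ell}\normHsemi{\phi}{\Omega}$, which is the claim; that $\beta<1$ and is bounded away from $1$ is immediate from the formula since $\CI$ is bounded independently of $\epsilon$ (only mildly dependent on $H/h$). The main obstacle — and the part that needs genuine care rather than routine estimation — is step (iii): showing the convection commutator is truly of the same order as the diffusion commutator. This is where the skewed, $b$-biased geometry of $S_{T,\ell,b}$ (with the long upwind extension of length $\ell H^2/\epsilon$ and the short downwind extension of length $\ell H$) is essential, so that the cutoff can be made to vary slowly along streamlines; one must verify that such an $\eta$ with $|b\cdot\nabla\eta|\lesssim\epsilon/H^2$ and $|\nabla\eta|\lesssim H^{-1}$ actually exists on the ring $R_\ell$ for the given patch shape, and that the ring still has width comparable to $H$ transverse to $b$ so that the interpolation estimate \eqref{eqn:interpolation} applies element-wise. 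A secondary technical point is handling the elements of $\tri_h$ cut by $\partial S_{T,\ell,b}$ (since $S_{T,\ell,b}$ need not be union of mesh elements), which is the reason the statement is phrased with $S_{T,\ell,b}$ rather than $\Omega_{T,\ell,b}$ and is cleanly fixed by passing to $\Omega_{T,\ell,b}$ in the Corollary.
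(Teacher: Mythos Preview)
Your approach is essentially the paper's proof of Theorem~\ref{thm:decay}: a cutoff adapted to the anisotropic slab $S_{T,\ell,b}$, test with $\eta^2\phi$, collect commutator terms supported on the ring, and iterate the resulting one-step contraction. Two points deserve correction.

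First, the correction $(1-I_H)(\eta^2\phi)$ is unnecessary. Since $I_H$ is \emph{nodal} interpolation and $\phi=\corrector_T v_H\in\mathrm{Ker}\,I_H$ vanishes at every coarse node, so does $\eta^2\phi$; the paper uses $\eta^2\phi$ directly as test function. Your commutator (ii) therefore does not arise, and the $\CI$, $\CI^2$ factors in~\eqref{eqn:defbeta} come instead from the Poincar\'e-type bound $\normL{\phi}{R_\ell}\le \CI H\,\normHsemi{\phi}{R_\ell}$ (from~\eqref{eqn:interpolation} with $I_H\phi=0$) applied inside terms (i) and (iii).

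Second, and this is the substantive gap, term (iii) is \emph{not} handled by a two-sided bound $|b\cdot\nabla\eta|\lesssim\epsilon/H^2$. No such $\eta$ exists: on the \emph{downwind} face of the ring the slab $S_{T,\ell,b}$ has width only $H$ in the $b$-direction (see~\eqref{eqn:defStlb}), which forces $|b\cdot\nabla\eta|\approx H^{-1}$ there. The paper's observation is that the term in question is $-(b\cdot\nabla\eta,\eta\phi^2)$ and one needs only the one-sided estimate $-b\cdot\nabla\eta\le\epsilon/H^2$. On the downwind face $-b\cdot\nabla\eta\le0$, and since $\eta\phi^2\ge0$ that contribution is nonpositive and can be dropped; on the upwind face the slab has width $H^2/\epsilon$, giving $-b\cdot\nabla\eta\le\epsilon/H^2$. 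This sign argument (the paper's~\eqref{eqn:estbeta}) is the genuine mechanism; your phrases ``on the upwind side'' and ``on the relevant part'' hint at it but the verification you ask for at the end (existence of $\eta$ with $|b\cdot\nabla\eta|\lesssim\epsilon/H^2$ on the whole ring) would fail. With this fixed, your iteration and the final passage to $\Omega_{T,\ell,b}\supset S_{T,\ell,b}$ for the Corollary are exactly as in the paper.
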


\begin{remark}
Recall that in this two-dimensional situation $\CI\lesssim \log(\tfrac{H}{h})$
for $\CI$ from~\eqref{eqn:interpolation}.
Then given fixed $H\in (0,1)$ and let $h\to 0$,  the constant $\beta$ scales as 
\begin{align*}
  1-\beta^2 \gtrsim \frac{1}{1+\log(h)^2}.
\end{align*}
In the three dimensional case, Theorem~\ref{thm:decay} could essentially 
be proven in the same way, but the dependence of $\CI$ on $H/h$ is algebraic 
so that the decay rate deteriorates very fast.
\end{remark}

\begin{proof}[Proof of Theorem~\ref{thm:decay}]
The crucial point in the proof is~\eqref{eqn:estbeta} below, which exploits 
the direction of $b$. This allows for patches that are only enlarged 
in the direction of $-b$.
The remaining part of the proof then essentially follows as in~\cite{Mlqvist.Peterseim:2011}.

Define 
a cut-off function
\[\eta:=1 - \eta_1\eta_2,\]
where $0\leq\eta_1(x)\leq 1$ and $0\leq\eta_2(x)\leq 1$ are one-dimensional 
continuous piecewise affine cut-off functions along $t$ and $b$, respectively. 
Recall that $\mathrm{mid}_T$ denotes the baricenter of a coarse element $T$, 
$|b|=1$ and $t$ is a unit vector orthogonal to $b$. 
We define $\eta_1$ and $\eta_2$ by
\begin{equation}\label{eqn:defeta1}
\eta_1(x)=
\left\{ \begin{aligned}
         1 &\quad\text{ if }|(x-\mathrm{mid}_T)\cdot t|\leq (\ell-1) H; \\
         0 &\quad\text{ if }|(x-\mathrm{mid}_T)\cdot t|\geq \ell H
\end{aligned} \right.
\end{equation}
and
\begin{equation}\label{eqn:defeta2}
\eta_2(x)=
\left\{ \begin{aligned}
         1 &\quad\text{ if }-(\ell-1) H\leq -(x-\mathrm{mid}_T)\cdot b\leq (\ell-1)\frac{H^2}{\epsilon}; \\
         0 &\quad\text{ if }-(x-\mathrm{mid}_T)\cdot b\geq \ell\frac{H^2}{\epsilon} 
                           \;\text{ or }\;-(x-\mathrm{mid}_T)\cdot b\leq -\ell H.
\end{aligned} \right.
\end{equation}
We obtain from the construction above that $\nabla\eta_1(x)\cdot b=0$ for all 
$x\in\Omega$ and $\eta_1\leq 1$.
Moreover, since $-(b\cdot \nabla \eta_2(x))\leq 0$ if 
$0\leq (x-\mathrm{mid}_T)\cdot b$, we deduce
\begin{align}\label{eqn:estbeta}
  -b\cdot\nabla\eta=-(b\cdot\nabla\eta_1)\eta_2-(b\cdot\nabla\eta_2)\eta_1
  =-(b\cdot\nabla\eta_2)\eta_1\leq \frac{\epsilon}{H^2}.
\end{align}
Furthermore, $\eta\vert_{S_{T,\ell-1,b}}=0$ and 
$\eta\vert_{\Omega\setminus S_{T,\ell,b}}=1$, and $\eta$ is bounded between 
0 and 1 and satisfies the Lipschitz continuity
\begin{align}\label{eqn:Lipschitzeta}
  \normI{\nabla\eta}{\Omega}\leq 2/H.
\end{align}
Note that $\mathrm{supp}(\nabla\eta)\subset S_{T,\ell,b}\setminus S_{T,\ell-1,b}$.
%\subset \Omega_{T,\ell,b}\setminus \Omega_{T,\ell-2,b}$.

Let $(\bullet,\bullet):=(\bullet,\bullet)_{L^2(\Omega)}$ denote the 
$L^2$ scalar product and define $\varphi:=\corrector_{T} v_H$. 
Due to $\nabla\cdot b=0$, we have
$(b\cdot\nabla(\eta\varphi),\eta\varphi)=0$, and
\begin{align*}
%\epsilon\normHsemi{\varphi}{\Omega\setminus\Omega_{T,\ell,b}}^2
\epsilon\normHsemi{\varphi}{\Omega\setminus S_{T,\ell,b}}^2
&\leq\epsilon(\nabla(\eta\varphi),\nabla(\eta\varphi)) +(b\cdot\nabla(\eta\varphi),\eta\varphi)\\
&=\epsilon(\nabla\varphi,\eta\nabla(\eta\varphi))
       +\epsilon(\nabla\eta,\varphi\nabla(\eta\varphi))
       +(b\cdot\nabla(\eta\varphi),\eta\varphi)\\
&= \epsilon(\nabla\varphi,\nabla(\eta^2\varphi))+(b\cdot\nabla(\eta^2\varphi),\varphi)
            -\epsilon(\nabla\varphi,\eta\varphi\nabla\eta)\\
&\qquad\qquad +\epsilon(\nabla\eta,\varphi\nabla(\eta\varphi))
            -(b\cdot\nabla\eta,\eta\varphi^2).
\end{align*}
Observe that $\eta^2\varphi\in \mathrm{Ker}I_H$, and we obtain
\begin{align}\label{eqn:12}
\epsilon(\nabla\varphi,\nabla(\eta^2\varphi))
  +(b\cdot\nabla(\eta^2\varphi),\varphi)=
a(\eta^2\varphi,\varphi)=
  a_T(\eta^2\varphi,v_H)=0
\end{align}
by the definition of $\corrector_{T} $ in \eqref{eqn:defloccor}.
Thus, we arrive at
 \begin{align}\label{eqn:termsEs}
&\epsilon\normHsemi{\varphi}{\Omega\setminus S_{T,\ell,b}}^2
\leq
 \epsilon\lvert(\nabla\varphi,\eta\varphi\nabla\eta)\rvert 
+\epsilon\lvert(\nabla\eta,\varphi\nabla(\eta\varphi))\rvert
-(b\cdot\nabla\eta,\eta\varphi^2).
\end{align}
We will estimate each term on the right hand side of \eqref{eqn:termsEs}.
With $\eta\leq 1$ and~\eqref{eqn:Lipschitzeta}, a Cauchy inequality leads to
\begin{align*}
  \epsilon|(\nabla\varphi,\eta\varphi\nabla\eta)|
  &\leq 2\epsilon H^{-1}\normHsemi{\varphi}{S_{T,\ell,b}\setminus S_{T,\ell-1,b}}
  \normL{\varphi}{S_{T,\ell,b}\setminus S_{T,\ell-1,b}}\\
  &\leq 2 \CI\epsilon\normHsemi{\varphi}{S_{T,\ell,b}\setminus S_{T,\ell-1,b}}^2,
\end{align*}
where we have used the fact that $\varphi\in \mathrm{Ker}I_H$ and 
estimate \eqref{eqn:interpolation} in the last inequality.

The same arguments imply for the second term in~\eqref{eqn:termsEs}
\begin{align*}
 \epsilon|(\nabla\eta,\varphi\nabla(\eta\varphi))|
  &\leq 2\epsilon H^{-1}
      \normL{\varphi}{S_{T,\ell,b}\setminus S_{T,\ell-1,b}}
     \normHsemi{\eta\varphi}{S_{T,\ell,b}\setminus S_{T,\ell-1,b}}\\
%   &\leq 2 \CI \epsilon\normHsemi{\varphi}{\Omega_{T,\ell,b}\setminus\Omega_{T,\ell-2,b}}
%    (\CI\normHsemi{\varphi}{\Omega_{T,\ell,b}\setminus\Omega_{T,\ell-2,b}}
%    +\normHsemi{\varphi}{\Omega_{T,\ell,b}\setminus\Omega_{T,\ell-2,b}})\\
  &\leq 2 \epsilon \left(\CI^2+\CI\right)
       \normHsemi{\varphi}{S_{T,\ell,b}\setminus S_{T,\ell-1,b}}^2.
\end{align*}
The crucial point in the estimation of the last term in~\eqref{eqn:termsEs}
is the estimate~\eqref{eqn:estbeta}, which implies together with 
$\eta\varphi^2\geq 0$
\begin{align*}
 -(b\cdot\nabla\eta,\eta\varphi^2)
 &\leq \frac{\epsilon}{H^2}
   \normL{\varphi}{S_{T,\ell,b}\setminus S_{T,\ell-1,b}}^2\\
%  &\leq \frac{\epsilon}{H^2}\left(\CI H\right)^2
%        \normHsemi{\varphi}{\Omega_{T,\ell,b}\setminus\Omega_{T,\ell-2,b}}^2\\
 &\leq \epsilon \CI^2\normHsemi{\varphi}{S_{T,\ell,b}\setminus S_{T,\ell-1,b}}^2.
\end{align*}
%
%We can rewrite this term as
%\begin{align*}
%(b(x)\cdot\nabla\eta,\eta\corrector^{2}v)=-(b(x)\cdot\nabla\correctorv,\eta^{2}\correctorv)
%\end{align*}
%as a result of divergence free of the velocity field $b(x)$ and the fact that $\corrector^{2}v=0$, on $\partial\Omega$.
%
%Then apply \eqref{eqn:12} again, we get
%\begin{align*}
%(b(x)\cdot\nabla\eta,\eta\corrector^{2}v)=\epsilon(\nabla\correctorv,\nabla(\eta^2\correctorv))
%\end{align*}
%{\color{red}seems the estimate of the last term will depend on the bound of $H\mathrm{Pe}$}.
%
%Another option. Estimate the last term directly.
%\begin{align*}
%(b(x)\cdot\nabla\eta,\eta\corrector^{2}v)
%&\leq H^{-1}\normI{b(x)}{\Omega_{T,l}\setminus\Omega_{T,l-4}}
%\normL{\correctorv}{\Omega_{T,l}\setminus\Omega_{T,l-4}}^2\\
%&\leq H^{-1}(C_{I_H}H)^2\normI{b(x)}{\Omega_{T,l}\setminus\Omega_{T,l-4}}
%\normHsemi{\correctorv}{\Omega_{T,l+1}\setminus\Omega_{T,l-5}}^2\\
%&\leq\epsilon C_{I_H}^2\frac{H\normI{b(x)}{\Omega}}{\epsilon}
%\normHsemi{\correctorv}{\Omega_{T,l+1}\setminus\Omega_{T,l-5}}^2.
%\end{align*}
Assemble all estimates above for \eqref{eqn:termsEs}, to conclude
\begin{align*}
  &\epsilon\normHsemi{\varphi}{\Omega\setminus S_{T,\ell,b}}^2
    \leq \epsilon \left(4\CI+3\CI^2\right)
      \normHsemi{\varphi}{S_{T,\ell,b}\setminus S_{T,\ell-1,b}}^2.
\end{align*}
Define $C\hspace{-0.6ex}\left(\tfrac{H}{h}\right):=4\CI+3\CI^2$, which leads to 
\begin{align*}
  \normHsemi{\varphi}{\Omega\setminus S_{T,\ell,b}}^2
    \leq C\hspace{-0.6ex}\left(\tfrac{H}{h}\right)
      (\normHsemi{\varphi}{\Omega\setminus S_{T,\ell-1,b}}^2
                   -\normHsemi{\varphi}{\Omega\setminus S_{T,\ell,b}}^2),
\end{align*}
and therefore
\begin{align*}
  &\normHsemi{\varphi}{\Omega\setminus S_{T,\ell,b}}^2
    \leq \frac{C\hspace{-0.6ex}\left(\tfrac{H}{h}\right)}{1+C\hspace{-0.6ex}\left(\tfrac{H}{h}\right)}
       \normHsemi{\varphi}{\Omega\setminus S_{T,\ell-1,b}}^2.
\end{align*}
Repeating this process, we derive at
\begin{align*}
  &\normHsemi{\varphi}{\Omega\setminus S_{T,\ell,b}}^2
    \leq \left(\frac{C\hspace{-0.6ex}\left(\tfrac{H}{h}\right)}
              {1+C\hspace{-0.6ex}\left(\tfrac{H}{h}\right)}\right)^{\ell}
              \normHsemi{\varphi}{\Omega}^2.
\end{align*}
This concludes the proof.
\end{proof}

\begin{remark}[non-constant $b$]\label{remark:varb}
If the velocity field $b$ is divergence-free, but not globally constant, the definition of the 
rectangles $S_{T,\ell,b}$ has to be modified in that they have to follow 
the velocity. 

This should be made more precise in the situation that there exists a 
bounded diffeomorphism with bounded inverse that maps a constant reference 
velocity field $b_\mathrm{ref}$ to $b$, in the following sense.
Assume that there exists a reference domain $\Omega_\mathrm{ref}$ and 
a diffeomorphism $\psi:\Omega_{\mathrm{ref}}\to \Omega$, 
$\psi\in \mathcal{C}^1(\Omega_{\mathrm{ref}})$, such that 
\begin{align*}
  D \psi(y) b_\mathrm{ref} = b(\psi(y))
  \qquad\text{for all }y\in \Omega_\mathrm{ref}.
\end{align*}
The domain $S_{T,\ell,b}$ (formerly a rectangle) is then defined as 
$S_{T,\ell,b}:=\psi(S_{\mathrm{ref},T,\ell,b_\mathrm{ref}})$, where 
$S_{\mathrm{ref},T,\ell,b_\mathrm{ref}}\subset \Omega_\mathrm{ref}$ is 
defined for the constant vector field $b_\mathrm{ref}$ as 
in~\eqref{eqn:defStlb}.
The cut-off function $\eta=1-\eta_1 \eta_2$ is then defined by 
$\eta_j(x):=\eta_{j,\mathrm{ref}}(\psi^{-1}(x))$ for $\eta_{j,\mathrm{ref}}$ 
defined as in~\eqref{eqn:defeta1}--\eqref{eqn:defeta2}.
The boundedness of $D\psi^{-1}$ then proves 
\begin{align*}
 \|\nabla \eta\|_{L^\infty(\Omega)} \lesssim  H^{-1}.
\end{align*}
The definitions of $\eta_1$ and $\eta_2$ lead for all $x\in\Omega$ to 
\begin{align*}
  b(x)\cdot\nabla \eta_j (x)
    = \nabla \eta_{j,\mathrm{ref}}\vert_{\psi^{-1}(x)}
        \cdot (D\psi^{-1}(x) b(x))
    = \nabla \eta_{j,\mathrm{ref}}\vert_{\psi^{-1}(x)}
        \cdot b_\mathrm{ref},
\end{align*}
which implies $-b\cdot\nabla\eta\leq \epsilon/H^2$.
Theorem~\ref{thm:decay} then follows as before.

\begin{figure}
  \begin{center}
    \includegraphics[width=.49\textwidth]{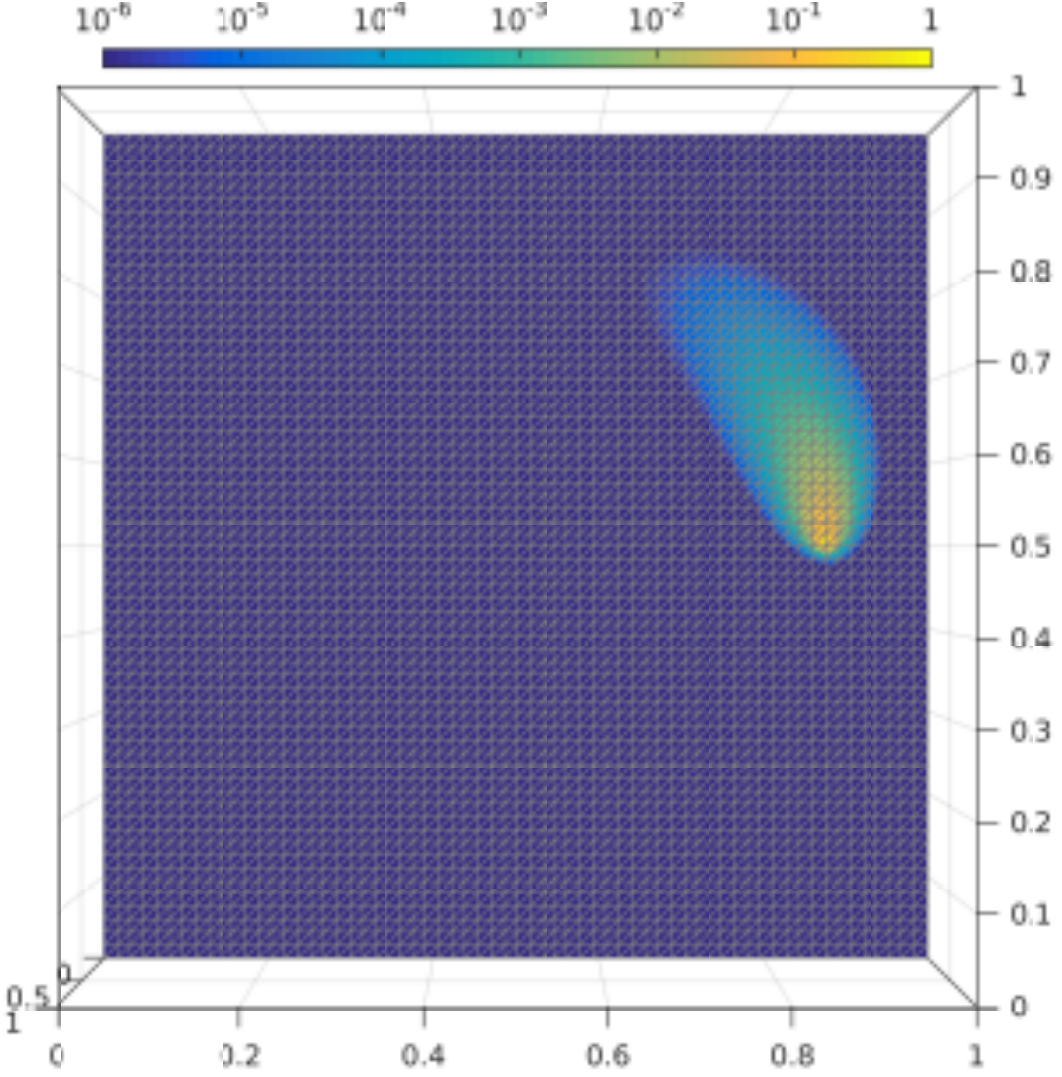}
    \includegraphics[width=.49\textwidth]{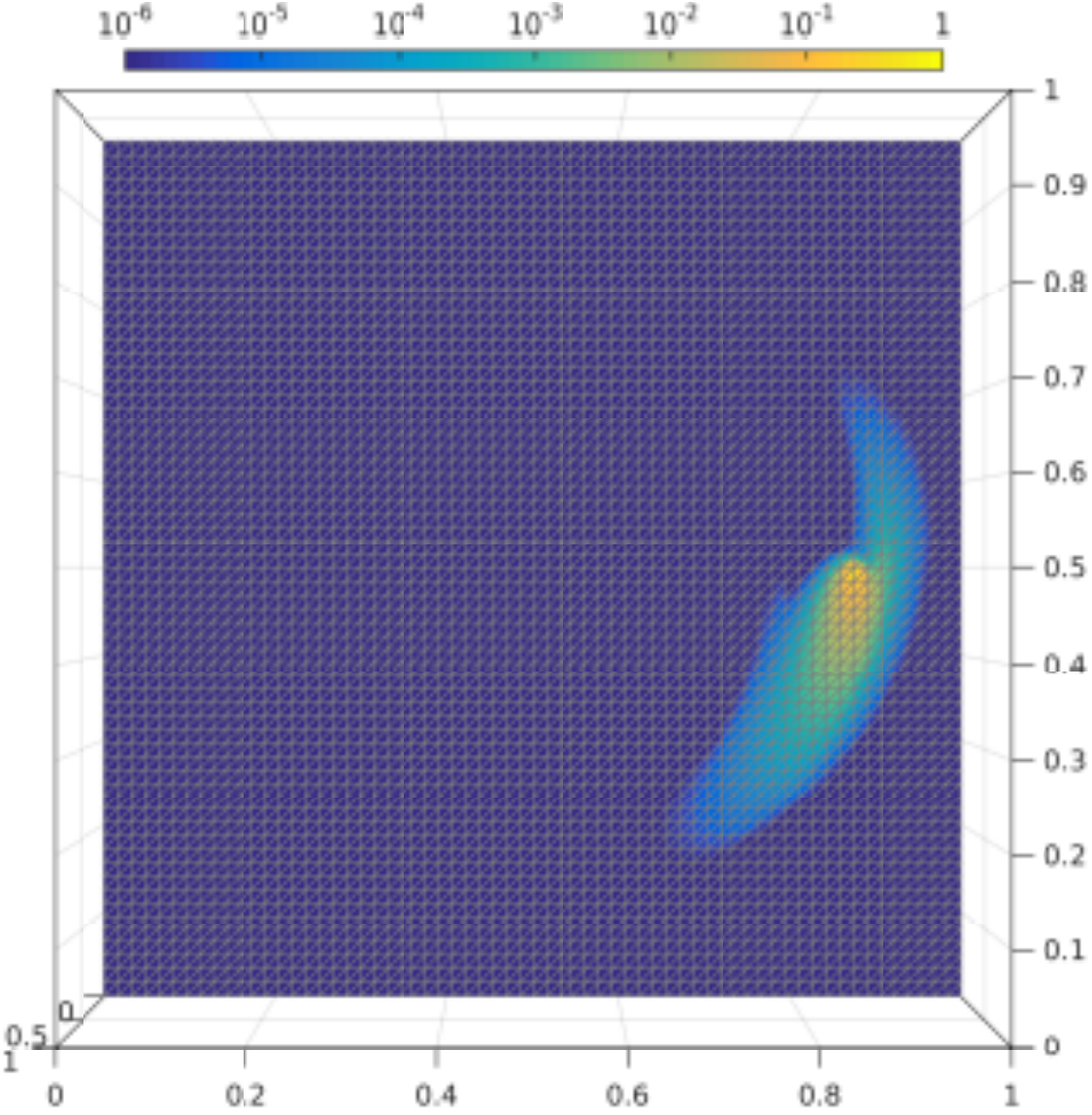}
  \end{center}
  \caption{\label{fig:correctorvarb}Top view on the modulus of (ideal) test 
    basis functions $(1-\corrector)\lambda_z$ with logarithmic color scale.
    The underlying data are $\epsilon=2^{-8}$ and $b_1$ (left) and 
    $b_2$ (right) from~\eqref{eqn:varb12}.}
\end{figure}

Figure~\ref{fig:correctorvarb} displays the modified basis 
functions $(1-\corrector)\lambda_z$ for the following non-constant vector fields 
\begin{align}\label{eqn:varb12}
  b_1(x) = 5\begin{pmatrix} x_2-0.5 \\ 0.5 - x_1 \end{pmatrix}
  \;\text{and}\;\;
  b_2(x) =  \left(2\left(\left\lceil x-\begin{pmatrix} 0.5\\0.5\end{pmatrix}\right\rceil-1\right) 
            \operatorname{mod} 2\right) b_1(x),
\end{align}
where $\lceil r\rceil:=\min\{k\in\mathbb{N}\mid k\geq r\}$ denotes the 
ceiling function.
One observes that the decay is directed along $b$.
\end{remark}

\section{LOD method and error analysis}\label{s:lod}

Based on the results above, we conclude that the energy norm of $\corrector_T v$ 
decreases very fast outside of a local region around $T$ for any $v\in V_H$. 
Therefore, a localization process is feasible to reduce the computational costs 
of the ideal method but maintain a good accuracy. 
In this section, we want to localize the corrector problems \eqref{eqn:corrector2}.  To this end, 
instead of solving them on the global domain $\Omega$, we obtain a 
good approximation of those correctors by solving a local problem on $\Omega_{T,\ell,b}$. 

Firstly, let us introduce some notations. In the following, we will denote 
$R_H=\mathrm{Ker}I_H$, and 
$R_H(\Omega_{T,\ell,b})=\{w\in R_H,\text{ and }w=0 \text{ in }\Omega\setminus \Omega_{T,\ell,b}\}$.
Recall the local bilinear form $a_\omega$ defined in~\eqref{eqn:defaloc}.
The localized element corrector $\corrector_{T,\ell} : V_H\rightarrow R_H(\Omega_{T,\ell,b})$
is defined as follows: given $v_H\in V_H$, let 
$\corrector_{T,\ell}v_H\in R_H(\Omega_{T,\ell,b})$ satisfy
\begin{equation}\label{eqn:corrector1_local}
a_{\Omega_{T,\ell,b}}(w,\corrector_{T,\ell}v_H)=a_T(w,v_H)
 \qquad \text{for all }w\in R_H(\Omega_{T,\ell,b}).
\end{equation}
Then we denote $\corrector_{\ell} :=\sum\limits_{T\in \tri_H}\corrector_{T,\ell}$;
see Figure \ref{fig:testbasis} for 
an illustration of the localized correctors $\corrector_{\ell} \lambda_z$ and the 
corresponding localized test basis.

\begin{figure}[t]
%\sidecaption[t]
% Use the relevant command for your figure-insertion program
% to insert the figure file.
% For example, with the graphicx style use
\includegraphics[width=.245\textwidth]{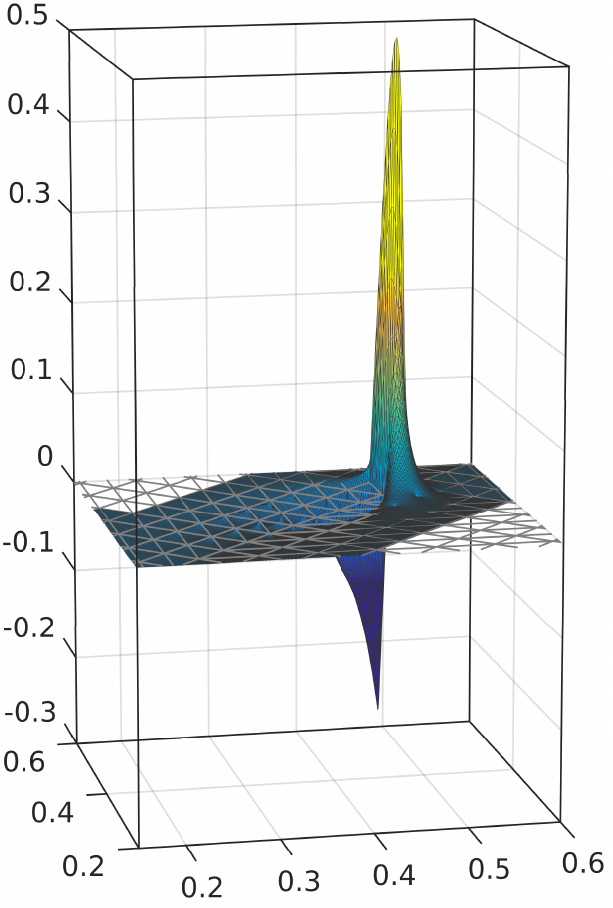}
\includegraphics[width=.245\textwidth]{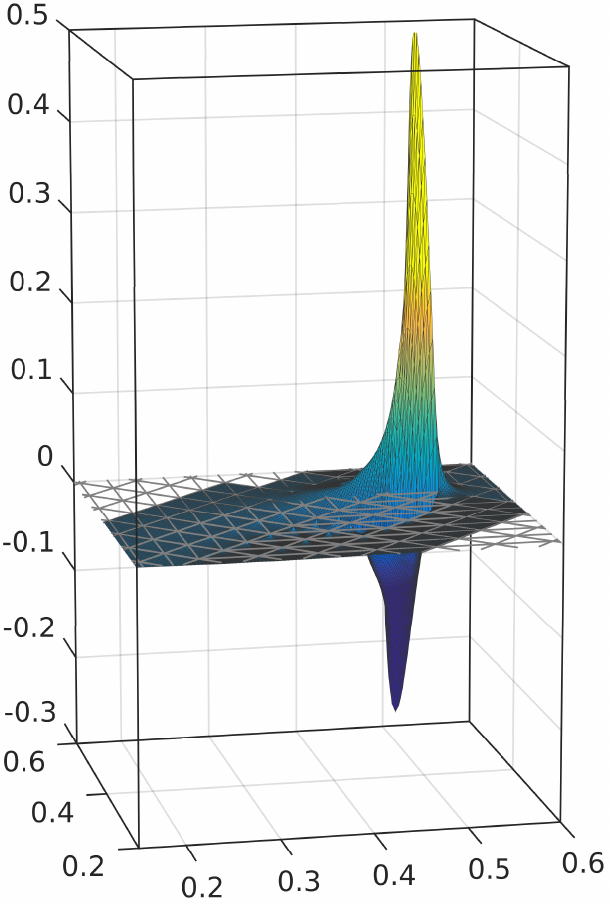}
\includegraphics[width=.245\textwidth]{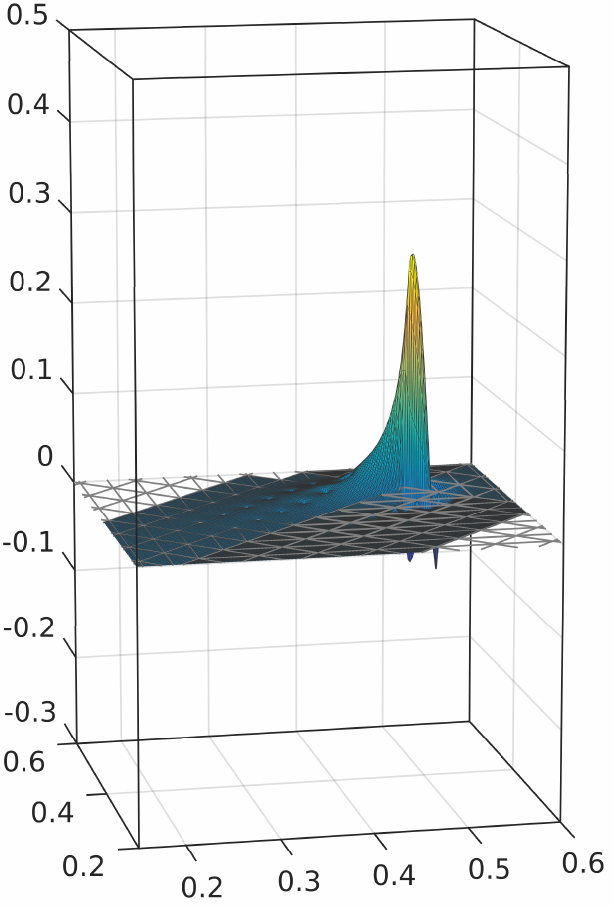}
\includegraphics[width=.245\textwidth]{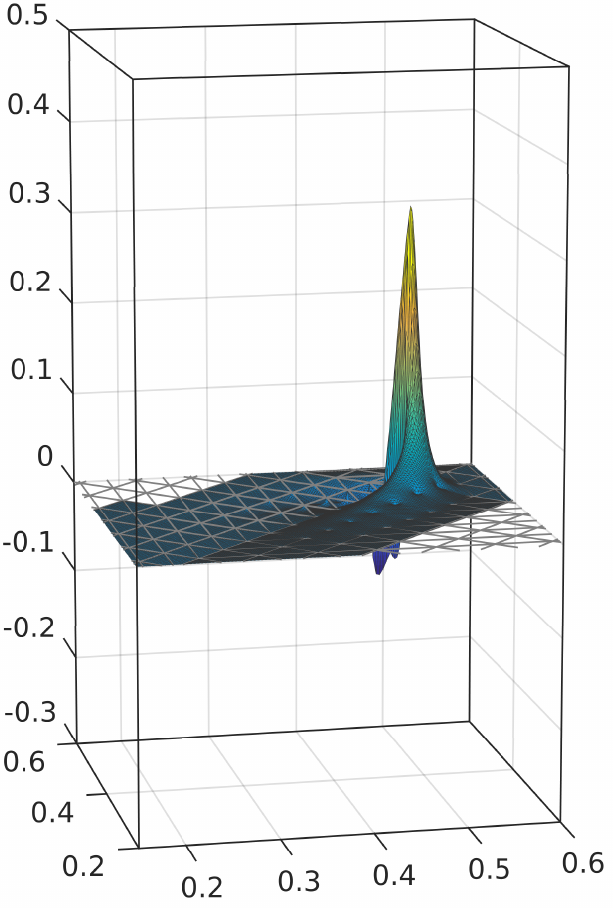}\vspace{1ex}\\
\includegraphics[width=.5\textwidth]{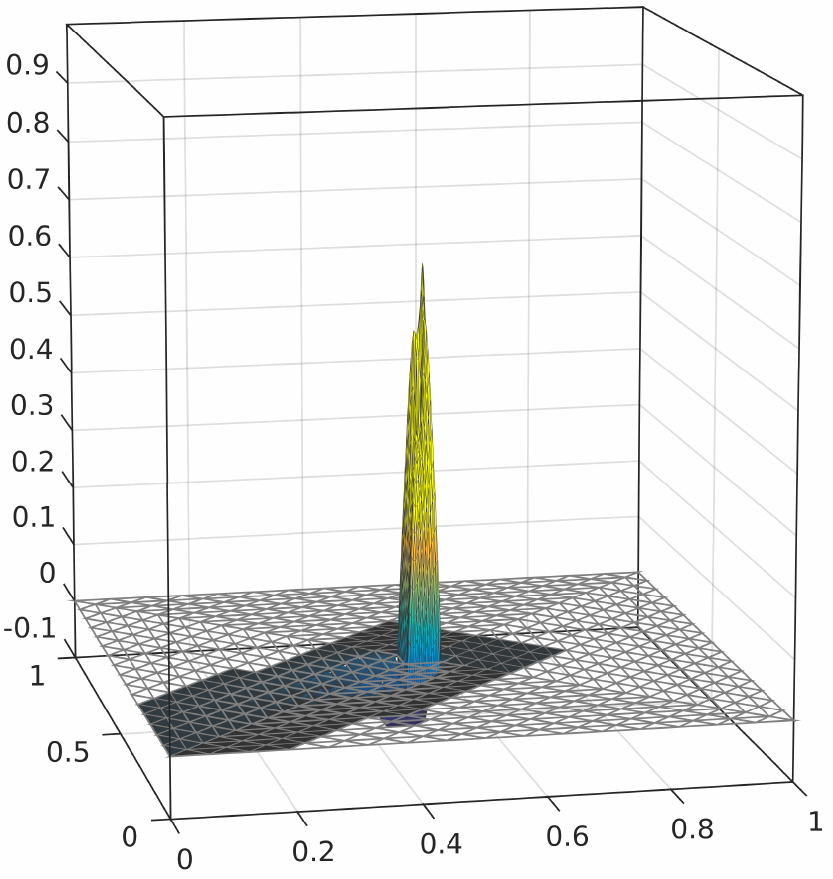}
\includegraphics[width=.5\textwidth]{LODtest_testl}%
% If no graphics program available, insert a blank space i.e. use
%\picplace{5cm}{2cm} % Give the correct figure height and width in cm
%
\caption{Localized element correctors $\corrector_{T,\ell}\lambda_z$ for $\ell=2$
and all four elements $T$ adjacent to the vertex $z=[0.5,0.5]$ (top),
localized nodal corrector $\corrector_{\ell}\lambda_z=\sum_{T\ni z}
\corrector_{T,\ell}\lambda_z$ (bottom left) and corresponding test basis function $
(1-\corrector_{\ell})\lambda_z$ (bottom right). The underlying data is
$b=[\cos(0.7),\sin(0.7)]$ and $\epsilon=2^{-7}$. The computations have been
performed by standard linear finite elements on local fine meshes of
width $h=2^{-8}$. See Fig.~\ref{fig:testbasis_ideal} for
a comparison with the ideal global  corrector and basis.}
\label{fig:testbasis}       % Give a unique label
\end{figure}

In the following lemma, we will show that $\corrector_{T,\ell}$ is a good approximation 
of $\corrector_{T} $ provided that the local patches $\Omega_{T,\ell,b}$ are sufficiently large.
For the ease of presentation, we denote the mesh P\'eclet number $\Peclet{H}$ of $\tri_H$ by
\begin{align}\label{eqn:defPeclet}
  \Peclet{H}:=H\|b\|_{L^\infty(\Omega)}/\epsilon.
\end{align}
Recall the definition of $\beta$ from~\eqref{eqn:defbeta}.

\begin{lemma}\label{lemma:LocalGlobal}
Given $v\in V_H$ and $\ell\in\mathbb{N}_{+}$, it holds that
\begin{align}\label{eqn:LocalGlobal}
  \normHsemi{\corrector_{T} v-\corrector_{T,\ell} v}{\Omega}
   \lesssim \left( 1+\Peclet{H}\CI\right)^2 
        (\CI+1)  \beta^{\ell-1}
   \normHsemi{v}{T}.
\end{align}
\end{lemma}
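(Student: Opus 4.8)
The plan is to follow the standard LOD localization argument (as in \cite{Mlqvist.Peterseim:2011, Peterseim:2014}), with the extra twist that the relevant $b$-biased cut-off function from the proof of Theorem~\ref{thm:decay} has a gradient of size $O(1/H)$ but an $L^\infty$-norm that is only $O(1)$, so that the convection term contributes a factor involving the mesh P\'eclet number $\Peclet{H}$. First I would set $e:=\corrector_T v-\corrector_{T,\ell}v\in R_H$ and, using the definitions \eqref{eqn:defloccor} and \eqref{eqn:corrector1_local}, derive the Galerkin-orthogonality-type identity $a(w,e)=0$ for all $w\in R_H(\Omega_{T,\ell,b})$. Then I would test the global corrector equation for $\corrector_T v$ against a suitable localized function of the form $(1-\eta)\corrector_T v$ truncated to lie in $R_H(\Omega_{T,\ell,b})$; more precisely, I would introduce a cut-off $\eta$ that equals $1$ outside $S_{T,\ell-1,b}$ and $0$ on $S_{T,\ell-2,b}$ (so $(1-\eta)\varphi$ vanishes outside $\Omega_{T,\ell,b}$), set $z:=I_H((1-\eta)\varphi)$ with $\varphi:=\corrector_T v$, and use $w:=(1-\eta)\varphi-z\in R_H(\Omega_{T,\ell,b})$ as a quasi-interpolant of $\varphi$ that is supported in the patch.

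**Key steps.** (i) From $\epsilon\normHsemi{e}{\Omega}^2=a(e,e)=a(\varphi-\corrector_{T,\ell}v,e)=a(\varphi,e)-a(\corrector_{T,\ell}v,e)$; since $\corrector_{T,\ell}v$ solves the local problem and $e$ need not be in $R_H(\Omega_{T,\ell,b})$, I would instead write $\epsilon\normHsemi{e}{\Omega}^2=a(e,e)=a(\varphi-w,e)+a(w-\corrector_{T,\ell}v,e)$ and observe that the second term vanishes because $w-\corrector_{T,\ell}v\in R_H(\Omega_{T,\ell,b})$ while $a(\cdot,e)$ annihilates that space (this is the orthogonality from step one, applied with the roles arranged so that $a_{\Omega_{T,\ell,b}}$ and $a_T$ match up — here one uses that $\mathrm{supp}(\nabla\eta)$ and $T$ are both inside the patch). (ii) Hence $\epsilon\normHsemi{e}{\Omega}^2\le\epsilon\normHsemi{\varphi-w}{\Omega}\normHsemi{e}{\Omega}+\normL{b}{\Omega}\,\normL{\nabla(\varphi-w)}{\Omega}\,\normL{e}{\Omega}$, and I would bound $\normL{e}{\Omega}\lesssim\normL{b}{\Omega}^{-1}\cdots$ — no, cleaner: use continuity \eqref{eqn:continuitya} to get $\epsilon\normHsemi{e}{\Omega}\lesssim(1+\Peclet{H})\normHsemi{\varphi-w}{\Omega}\cdot$(something), but to avoid an inhomogeneous $\epsilon$-loss one divides through and keeps track: the convection term gives $\normL{b}{}H\cdot H^{-1}\normL{\nabla(\varphi-w)}{}\|e\|_{L^2}$, and $\|e\|_{L^2}$ is controlled by $I_He=0$ and \eqref{eqn:interpolation} as $\|e\|_{L^2}\lesssim H\CI\normHsemi{e}{\Omega}$, producing the factor $(1+\Peclet{H}\CI)$. (iii) It remains to estimate $\normHsemi{\varphi-w}{\Omega}$. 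By construction $\varphi-w=\eta\varphi+z$ where $z=I_H((1-\eta)\varphi)=-I_H(\eta\varphi)$ (since $I_H\varphi=0$), so $\normHsemi{\varphi-w}{\Omega}=\normHsemi{\eta\varphi-I_H(\eta\varphi)}{\Omega}\lesssim\CI\normHsemi{\eta\varphi}{\Omega}$ by \eqref{eqn:interpolation}, and $\normHsemi{\eta\varphi}{\Omega}\lesssim\normHsemi{\varphi}{\Omega\setminus S_{T,\ell-2,b}}+H^{-1}\normL{\varphi}{(S_{T,\ell-1,b}\setminus S_{T,\ell-2,b})}\lesssim(\CI+1)\normHsemi{\varphi}{\Omega\setminus S_{T,\ell-2,b}}$, again using \eqref{eqn:interpolation}. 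Finally Theorem~\ref{thm:decay} gives $\normHsemi{\varphi}{\Omega\setminus S_{T,\ell-2,b}}\lesssim\beta^{\ell-2}\normHsemi{\varphi}{\Omega}$ and, via \eqref{eqn:defloccor} tested with $w=\varphi$ together with \eqref{eqn:continuitya} and \eqref{eqn:interpolation}, $\normHsemi{\varphi}{\Omega}=\normHsemi{\corrector_T v}{\Omega}\lesssim(1+\Peclet{H}\CI)\normHsemi{v}{T}$. Collecting the powers of $(1+\Peclet{H}\CI)$, of $\CI$, and of $\beta$ (absorbing the shift from $\beta^{\ell-2}$ to $\beta^{\ell-1}$ into the constant, or by redefining the cut-off layers) yields \eqref{eqn:LocalGlobal}.

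**Main obstacle.** The delicate point is the bookkeeping of the P\'eclet factor: one must avoid an artificial $\epsilon^{-1}$ blow-up by never applying the crude continuity bound $a(u,v)\le C(\Omega,b)\normHsemi{u}{}\normHsemi{v}{}$ to the terms where $e$ appears, but instead splitting $a(\varphi-w,e)=\epsilon(\nabla(\varphi-w),\nabla e)+(b\cdot\nabla(\varphi-w),e)$ and estimating the second summand with $\|e\|_{L^2}\lesssim H\CI\normHsemi{e}{\Omega}$ — and similarly controlling $\normHsemi{\corrector_T v}{\Omega}$ by testing its own defining equation and moving the convection term onto the $L^2$-norm of the test function. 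A second subtlety is ensuring the cut-off layers are nested correctly so that $(1-\eta)\varphi$ genuinely lies in $R_H(\Omega_{T,\ell,b})$ after applying $I_H$ (which may enlarge the support by one layer of elements), which is exactly why the final bound carries $\beta^{\ell-1}$ rather than $\beta^{\ell}$; the rest is the routine Cauchy–Schwarz and interpolation estimates already rehearsed in the proof of Theorem~\ref{thm:decay}.
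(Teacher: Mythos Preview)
Your proposal is correct and follows essentially the same route as the paper: Galerkin orthogonality reduces the bound on $\normHsemi{e}{\Omega}$ to $(1+\Peclet{H}\CI)\,\normHsemi{\corrector_T v - w}{\Omega}$ for any $w\in R_H(\Omega_{T,\ell,b})$, the choice $w=(1-\eta)\corrector_T v$ with the cut-off from Theorem~\ref{thm:decay} together with that decay estimate supplies the factor $(\CI+1)\beta^{\ell-1}$, and testing \eqref{eqn:defloccor} with $\corrector_T v$ itself yields the second $(1+\Peclet{H}\CI)$ via $\|\corrector_T v\|_{L^2}\le H\CI\normHsemi{\corrector_T v}{\Omega}$. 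One simplification: your correction $z=I_H((1-\eta)\varphi)$ is identically zero because $\varphi\in\mathrm{Ker}I_H$ already vanishes at every coarse node, so the worry about $I_H$ enlarging supports (which is in any case a Cl\'ement-type phenomenon, not a nodal one) does not arise, and dropping $z$ lets you use the same cut-off layers as the paper and land directly on $\beta^{\ell-1}$ without the extra $\CI$ factor in step~(iii).
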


\begin{proof}
Denote $e_{T,\ell}:=\corrector_{T} v-\corrector_{T,\ell} v$. In view of
$R_{H}(\Omega_{T,\ell,b})\subset R_{H}$, the definitions of the correctors 
in~\eqref{eqn:corrector1_local} and \eqref{eqn:corrector2} and 
the orthogonality of Petrov-Galerkin type, lead to
\begin{align*}
  \epsilon\normHsemi{e_{T,\ell}}{\Omega}^2
  =a(e_{T,\ell},e_{T,\ell})=a(e_{T,\ell}-w,e_{T,\ell})
  \quad\text{ for all }w\in R_{H}(\Omega_{T,\ell,b}).
\end{align*}
Since $I_H(e_{T,\ell})=0$, H\"{o}lder's inequality, 
and the approximation property~\eqref{eqn:interpolation} of $I_H$ imply  
\begin{align*}
 \normHsemi{e_{T,\ell}}{\Omega}^2
  \leq \left( 1+\Peclet{H}\CI\right) 
         \normHsemi{e_{T,\ell}}{\Omega}
    \normHsemi{e_{T,\ell}-w}{\Omega}.
\end{align*}
Since $w\in R_{H}(\Omega_{T,\ell,b})$ is arbitrary, we arrive at
\begin{align}\label{eqn:333}
  \normHsemi{e_{T,\ell}}{\Omega}
      \leq \left( 1+\Peclet{H}\CI\right)  
         \normHsemi{\corrector_{T}v-w}{\Omega}.
\end{align}
In the following, we construct a specific $w\in R_H(\Omega_{T,\ell,b})$ to control the term 
$\normHsemi{\corrector_{T}v-w}{\Omega}$. 
Let $\eta$ denote the cut-off function from the proof of Theorem~\ref{thm:decay},
such that $\eta\vert_{S_{T,\ell-1,b}}=0$ and 
$\eta\vert_{\Omega\setminus S_{T,\ell,b}}=1$. Note that 
$S_{T,\ell,b}\subset\Omega_{T,\ell,b}$ and therefore 
$\mu:=1-\eta$ satisfies $\mu\vert_{\Omega\setminus\Omega_{T,\ell,b}}=0$.
% Recall the definition of $\eta$ from the proof of Theorem~\ref{thm:decay} and 
% define $\mu=1-\eta$. Then $\mu\vert_{S_{T,\ell-1,b}}=1$ and 
% $\mu\vert_{\Omega\setminus S_{T,\ell,b}}=0$. 
In addition, $\mu$ is bounded between 0 and 1 and satisfies the Lip\-schitz continuity
\begin{align}\label{eqn:scalingmu}
  \normI{\nabla\mu}{\Omega}\leq 2 H^{-1}.
\end{align}
Define $w=\mu\corrector_{T}v$, then $w\in R_H(\Omega_{T,\ell,b})$.
Since $\corrector_{T}v\in R_H$, the fact that $0\leq\mu\leq 1$ 
and~\eqref{eqn:scalingmu} lead as in the proof of Theorem~\ref{thm:decay} to 
\begin{align*}
  \normHsemi{\corrector_{T}v-w}{\Omega}
    &=\normHsemi{\corrector_{T}v-\mu\corrector_{T}v}{\Omega\setminus S_{T,\ell-1,b}}\\
  &\leq 2 (\CI+1)\normHsemi{\corrector_{T}v}{\Omega\setminus S_{T,\ell-1,b}}.
\end{align*}
Theorem~\ref{thm:decay} then implies
\begin{align*}
  \normHsemi{\corrector_{T}v-w}{\Omega}
    &\lesssim (\CI+1)\beta^{\ell-1}\normHsemi{\corrector_{T} v}{\Omega}.
\end{align*}
The combination with~\eqref{eqn:333} implies 
\begin{align*}
  \normHsemi{\corrector_{T}v-\corrector_{T,\ell}v}{\Omega}
  \lesssim \left( 1+\Peclet{H}\CI\right) (\CI+1)\beta^{\ell-1}
      \normHsemi{\corrector_{T}v}{\Omega}.
\end{align*}
In the end, we show the stability of $\mathcal{C}_{T}$ to bound the term 
$\normHsemi{\corrector_{T}v}{\Omega}$. Since $I_H(\corrector_{T}v)=0$, the 
stability of $\corrector_{T}$ follows from 
\begin{align*}
 \epsilon \normHsemi{\corrector_{T} v}{\Omega}^2
   &= a(\corrector_{T} v,\corrector_{T} v)
   = a_T(\corrector_{T} v, v)\\
   &\leq \epsilon \normHsemi{\corrector_T v}{\Omega} \normHsemi{v}{T}
        + \|b\|_{L^\infty(T)}\normHsemi{v}{T} \normL{\corrector_T v}{\Omega}\\
   &\leq \left(\epsilon + H\|b\|_{L^\infty(T)}\CI\right) 
      \normHsemi{\corrector_{T} v}{\Omega}
        \normHsemi{v}{T},
\end{align*}
where the definition of the element corrector in~\eqref{eqn:corrector1_local}
implies the second equality and the approximation property~\eqref{eqn:interpolation}
leads to the last inequality.
This proves the assertion.
\end{proof}

% \noteMira{ 
% \begin{definition}\label{def:cutoff}
% We denote the cut-off function constructed in the proof above as 
% $\eta^{\ell-1,\ell}(x)$, which satisfies
% \begin{enumerate}
% \item $\eta^{\ell-1,\ell}=0\text{ in }S_{T,\ell-1,b}\supset \Omega_{T,\ell-2,b}$ 
%     and $\eta^{\ell-1,\ell}=1\text{ in }\Omega\setminus S_{T,\ell,b}\supset \Omega\setminus\Omega_{T,\ell,b}$;
%  \item $0\leq\eta^{\ell-1,\ell}(x)\leq 1$ and $\normI{\nabla\eta^{\ell-1,\ell}}{\Omega}\lesssim H^{-1}$;
%   \item $\mathrm{supp}(\nabla \eta^{\ell-1,\ell})\subset S_{T,
%     \ell,b}\setminus S_{T,\ell-1,b}\subset \Omega_{T,\ell,b}\setminus \Omega_{T,\ell-2,b}$.
% \end{enumerate}
% \end{definition}
% This type of cut-off function is used in the following sections.
% }

The following theorem assembles the local estimates from 
Lemma~\ref{lemma:LocalGlobal} to derive an estimate for the global 
corrector.

\begin{theorem}\label{theorem:decay}
Given $v\in V_H$ and $\ell\in\mathbb{N}_{+}$, it holds that
\begin{align}\label{eqn:GLAppro}
\normHsemi{\corrector v-\corrector_{\ell}v}{\Omega}
     \lesssim  C(H,h,\epsilon,b,\ell)  \beta^{\ell-1}
        %      \noteMira{C(\Omega)^3 (\CI+1)^2 \Col[\ell+1]^{1/2}\frac{\beta^{\ell}}{\epsilon^3}}
   \normHsemi{v}{\Omega}
\end{align}
with
\begin{equation}\label{eqn:defconstLoc}
\begin{aligned}
  C(H,h,\epsilon,b,\ell)&:= 
     \left( 1+\Peclet{H}\CI\right)^2 
        (\CI+1) \\
    &\qquad \times
        \left( 1+2\CI + \Peclet{H} \CI\right)
        \Col[\ell+2]^{1/2}.
\end{aligned}
\end{equation}
\end{theorem}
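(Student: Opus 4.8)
The plan is to reduce the global estimate~\eqref{eqn:GLAppro} to the local estimate~\eqref{eqn:LocalGlobal} of Lemma~\ref{lemma:LocalGlobal} by exploiting the decomposition $\corrector=\sum_{T\in\tri_H}\corrector_T$ and $\corrector_\ell=\sum_{T\in\tri_H}\corrector_{T,\ell}$, together with the finite-overlap property~\eqref{eqn:overlap} of the patches. Writing $v=\sum_{K\in\tri_H}v|_K$ (or rather using the partition of $\Omega$ into coarse elements), we have $\corrector v-\corrector_\ell v=\sum_{T\in\tri_H}(\corrector_T v-\corrector_{T,\ell}v)=:\sum_T e_{T,\ell}$. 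Since the summands $e_{T,\ell}$ do not have disjoint supports, I cannot simply add their $H^1$-seminorms; instead I would test against an arbitrary $w\in V$ (or work directly with the seminorm via a standard overlap argument). The key observation is that $e_{T,\ell}=\corrector_T v-\corrector_{T,\ell}v$ is supported in $\Omega\setminus S_{T,\ell-1,b}$ modulo the patch, but more importantly each $e_{T,\ell}$ is "driven" only by $v|_T$; combined with the exponential decay this will give the $\Col[\ell+2]^{1/2}$ factor.

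First I would establish the overlap bookkeeping. For a fixed $\ell$, enlarge to patches of level $\ell+2$ (or thereabouts) so that the support of $e_{T,\ell}$ — which sits inside $\Omega_{T,\ell,b}$ — is controlled, and so that each coarse element $K$ lies in at most $\Col[\ell+2]$ of the relevant enlarged patches. Then by a Cauchy–Schwarz inequality over the (at most $\Col[\ell+2]$) overlapping contributions,
\begin{align*}
  \normHsemi{\corrector v-\corrector_\ell v}{\Omega}^2
    = \sum_{K\in\tri_H}\normHsemi{\textstyle\sum_{T}e_{T,\ell}}{K}^2
    \leq \Col[\ell+2]\sum_{K\in\tri_H}\sum_{T:\,K\subset\Omega_{T,\ell,b}}\normHsemi{e_{T,\ell}}{K}^2
    \leq \Col[\ell+2]\sum_{T\in\tri_H}\normHsemi{e_{T,\ell}}{\Omega}^2.
\end{align*}
Next, apply Lemma~\ref{lemma:LocalGlobal} to each term: $\normHsemi{e_{T,\ell}}{\Omega}\lesssim(1+\Peclet{H}\CI)^2(\CI+1)\beta^{\ell-1}\normHsemi{v}{T}$. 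Summing the squares over $T$ and using $\sum_{T\in\tri_H}\normHsemi{v}{T}^2=\normHsemi{v}{\Omega}^2$ (the coarse elements partition $\Omega$) yields
\begin{align*}
  \normHsemi{\corrector v-\corrector_\ell v}{\Omega}^2
    \lesssim \Col[\ell+2]\,(1+\Peclet{H}\CI)^4(\CI+1)^2\beta^{2(\ell-1)}\normHsemi{v}{\Omega}^2,
\end{align*}
and taking square roots gives the claimed bound with the constant~\eqref{eqn:defconstLoc}.

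The remaining discrepancy is the extra factor $(1+2\CI+\Peclet{H}\CI)$ appearing in~\eqref{eqn:defconstLoc} beyond what the naive argument above produces. I expect this arises because the statement of Lemma~\ref{lemma:LocalGlobal} as used must be combined with a stability bound for $\corrector_{T,\ell}$ (or for the truncation step) on the patch level: to pass from "supported in $\Omega_{T,\ell,b}$" with the right dependence on $v|_T$ one needs an additional cut-off/stability estimate, and the factor $1+2\CI+\Peclet{H}\CI$ is exactly the constant from the interpolation estimate~\eqref{eqn:interpolation} applied to such a cut-off of $\corrector v$ (cf.\ the $2(\CI+1)$ and the $\epsilon+H\|b\|_{L^\infty}\CI$ terms in the proof of Lemma~\ref{lemma:LocalGlobal}). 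So the honest plan is: (i) write the error as a sum of local errors; (ii) for the overlap argument, one needs the local errors localized to patches, which requires re-cutting off using the same $\eta$/$\mu$ machinery, producing the factor $1+2\CI+\Peclet{H}\CI$; (iii) apply the finite-overlap bound~\eqref{eqn:overlap} with parameter $\ell+2$; (iv) invoke Lemma~\ref{lemma:LocalGlobal} termwise and sum. The main obstacle is step (ii): correctly accounting for the fact that $\corrector_{T,\ell}v$ and $\corrector_T v$ are not exactly co-supported, so that a clean Cauchy–Schwarz over patches requires truncating $e_{T,\ell}$ to a slightly larger patch and paying the interpolation constant — getting this truncation right (and confirming it only enlarges $\ell$ by an absolute constant, hence $\Col[\ell+2]$) is the delicate bookkeeping, after which everything is routine.
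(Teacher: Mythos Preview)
Your decomposition $\corrector v-\corrector_\ell v=\sum_T e_{T,\ell}$ and the intention to invoke Lemma~\ref{lemma:LocalGlobal} termwise are correct, but the overlap argument as written does not go through, and the proposed fix via ``truncating $e_{T,\ell}$'' does not repair it. The problem is that $e_{T,\ell}=\corrector_T v-\corrector_{T,\ell}v$ is \emph{not} supported in $\Omega_{T,\ell,b}$: only $\corrector_{T,\ell}v$ is, while $\corrector_T v$ lives on all of $\Omega$. Hence on a fixed element $K$ the sum $\sum_T e_{T,\ell}$ is a sum over \emph{all} $T\in\tri_H$, and your Cauchy--Schwarz step with constant $\Col[\ell+2]$ is not valid. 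Truncating each $e_{T,\ell}$ to a patch does not help either: the discarded tails are again globally supported, and summing them by the triangle inequality costs a factor $(\#\tri_H)^{1/2}\sim H^{-1}$, which is far worse than the target constant.

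The paper avoids this obstruction by \emph{not} estimating $\lvert\sum_T e_{T,\ell}\rvert$ directly. Instead it tests with the bilinear form: setting $z=\sum_T z_T$ (with a shift in $\ell$), one writes $\epsilon\normHsemi{z}{\Omega}^2=\sum_T a(z,z_T)$ and then, for each $T$, uses the cut-off $\eta$ together with the corrector definitions to show $a(\eta z,z_T)=a(\eta z,\corrector_T v)=a_T(\eta z,v)=0$. This Galerkin-type orthogonality is the missing idea: it localizes the interaction to $a((1-\eta)z,z_T)$, which is supported in $S_{T,\ell,b}$ and yields the factor $(1+2\CI+\Peclet{H}\CI)$ you were trying to account for. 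Only \emph{after} this localization does the finite-overlap bound~\eqref{eqn:overlap} legitimately enter (applied to $\sum_T\normHsemi{z}{\Omega_{T,\ell,b}}^2$, not to the supports of the $z_T$), and a final index shift $\ell\mapsto\ell+2$ produces $\Col[\ell+2]$. So the extra factor is not a truncation artefact but the price of the cut-off inside the bilinear form; your step~(ii) should be replaced by this testing argument.
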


\begin{proof}
Set $z:=\corrector v-\corrector_{\ell-2}v\in \mathrm{Ker}I_H$ and 
$z_T:=\corrector_{T}v-\corrector_{T,\ell-2}v$, then $z=\sum\limits_{T\in\tri_H}z_T$. We have
\begin{align}\label{eqn:454}
\epsilon\normHsemi{z}{\Omega}^2=\sum\limits_{T\in\tri_H}a(z,z_T).
\end{align}
We estimate $a(z,z_T)$ for each coarse element $T\in \mathcal{T}_H$. 
Recall that we defined a cutoff function $\eta$ in the proof of 
Theorem~\ref{thm:decay}. Note that $\Omega_{T,\ell-2,b}\subset S_{T,\ell-1,b}$.
By construction,  we have 
$\eta z\in R_H(\Omega\setminus S_{T,\ell-1,b})\subset R_H(\Omega\setminus \Omega_{T,\ell-2,b})$. 
Since
$\corrector_{T,\ell-2} v\vert_{\Omega\setminus\Omega_{T,\ell-2,b}}=0$,
this implies
\begin{align*}
 a(\eta z,z_T)=a(\eta z,\corrector_{T}v).
\end{align*}
Furthermore, notice that $\eta z\in \mathrm{ker}(I_H)$, which combined 
with~\eqref{eqn:defloccor}
yields
\begin{align*}
 a(\eta z,\corrector_{T}v)=a_{T}(\eta z,v)=0.
\end{align*}
As a consequence, we obtain
\begin{align*}
a(z,z_T)=a(\eta z,z_T)+a((1-\eta)z,z_T)=a((1-\eta)z,z_T).
\end{align*}
In the following, we will bound the term $a((1-\eta)z,z_T)$. 
Recall from the proof of Theorem~\ref{thm:decay} 
that $(1-\eta)\vert_{\Omega\setminus S_{T,\ell,b}}=0$, 
$\|\nabla(1-\eta)\|_{L^\infty(\Omega)}\leq 2H^{-1}$ and 
$\|(1-\eta)\|_{L^\infty(\Omega)}\leq 1$.
Taking into account that $I_H(z)=I_H(z_T)=0$, the stability 
of the projector $I_H$ from~\eqref{eqn:interpolation}, therefore,  
leads to
\begin{align*}
  &a((1-\eta)z,z_T)\\
   &\qquad\leq 
    \epsilon \normHsemi{(1-\eta) z}{S_{T,\ell,b}} \normHsemi{z_T}{S_{T,\ell,b}}
      + \|b\|_{L^\infty(\Omega)} \normL{z}{S_{T,\ell,b}} \normHsemi{z_T}{S_{T,\ell,b}}\\
     &\qquad\leq \left(\epsilon (1+2\CI) + \|b\|_{L^\infty(\Omega)} H \CI\right)
        \normHsemi{z}{S_{T,\ell,b}} \normHsemi{z_T}{S_{T,\ell,b}}.
\end{align*}
Since $S_{T,\ell,b}\subset \Omega_{T,\ell,b}$, the combination 
with~\eqref{eqn:454} and the application of a discrete Cauchy-Schwarz 
inequality yields
\begin{align*}
  \normHsemi{z}{\Omega}^2
    &\leq \left( 1+2\CI + \Peclet{H} \CI\right)
      \sum\limits_{T\in\tri_H}\normHsemi{z}{S_{T,\ell,b}}
          \normHsemi{z_T}{S_{T,\ell,b}}\\
   &\leq \left( 1+2\CI + \Peclet{H} \CI\right)\\
   & \qquad\qquad\qquad \times
     \Bigg(\sum\limits_{T\in\tri_H} \normHsemi{z}{\Omega_{T,\ell,b}}^2\Bigg)^{1/2}
     \Bigg(\sum\limits_{T\in\tri_H}\normHsemi{z_T}{S_{T,\ell,b}}^2\Bigg)^{1/ 2}.
\end{align*}
Lemma~\ref{lemma:LocalGlobal} implies 
\begin{align*}
 &\Bigg(\sum\limits_{T\in\tri_H}\normHsemi{z_T}{S_{T,\ell,b}}^2\Bigg)^{1/ 2}\\
  &\qquad\qquad  \lesssim \left( 1+\Peclet{H}\CI\right)^2 
        (\CI+1)  \beta^{\ell-3}
   \normHsemi{v}{\Omega},
\end{align*}
while the bounded overlap of the patches from~\eqref{eqn:overlap} implies 
\begin{align*}
 \Bigg(\sum\limits_{T\in\tri_H} \normHsemi{z}{\Omega_{T,\ell,b}}^2\Bigg)^{1/2}
   \leq \Col[\ell]^{1/2} \normHsemi{z}{\Omega}.
\end{align*}
In the end, the combination of the previous displayed inequalities and 
the shift $\ell\mapsto \ell+2$ shows the assertion.
\end{proof}

Now we are ready to define the localized multiscale test space as
\begin{align*}
  W_{H,\ell}=(1-\corrector_{\ell} )V_H.
\end{align*}
The Petrov-Galerkin method for the approximation of \eqref{eqn:fine-scale} based on 
the trial-test pairing $(V_{H},W_{H,\ell})$ defined above seeks $u_{H,\ell}\in V_{H}$ satisfying
\begin{align}\label{eqn:LOD}
  a(u_{H,\ell},w_{H,\ell})= \langle f,w_{H,\ell}\rangle_{H^{-1}(\Omega)\times H^1_0(\Omega)}
  \qquad \text{for all }w_{H,\ell}\in W_{H,\ell}.
\end{align}
%inf-sup condition.

\begin{lemma}[Inf-sup stability]\label{lemma:stabilityLOD}
If $\ell$ is sufficiently large, i.e., the oversampling condition 
\begin{align}\label{eqn:ell}
  \ell\gtrsim 
    \frac{1+\lvert\log(\CI)\rvert+\lvert\log(C(H,h,\epsilon,b,\ell)\rvert
         +\lvert\log(1+\CI+\Peclet{H} \CI^2)\rvert}{\lvert
            \log(4\CI+3\CI^2)-\log(1+4\CI+3\CI^2)\rvert}
\end{align}
% \begin{align}\label{eqn:ell}
% \ell\gtrsim \frac{\lvert\log(\epsilon)\rvert+\lvert\log(C(\Omega))\rvert 
%     +\lvert\log(\CI)\rvert}{\lvert\log(\CI^2+1)\rvert+\lvert\log(\CI^2+2)\rvert}+2
% \end{align}
% \footnote{To be computed in the end.}
is satisfied,
then the Petrov-Galerkin method \eqref{eqn:LOD} is inf-sup stable and 
\begin{align}\label{eqn:infsupLOD}
\inf\limits_{w_{H,\ell}\in W_{H,\ell}\setminus\{0\}}\sup\limits_{u_{H}\in V_{H}\setminus\{0\}}
          \frac{a(u_{H},w_{H,\ell} )}{\normHsemi{u_{H}}{\Omega}\normHsemi{w_{H,\ell}}{\Omega}}
            \gtrsim  \frac{\epsilon}{\CI}.
\end{align}
\end{lemma}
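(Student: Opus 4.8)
The goal is to transfer the inf-sup stability of the ideal pairing $(V_H,W_H)$ from Lemma~\ref{lemma:infsup} to the perturbed pairing $(V_H,W_{H,\ell})$, treating $W_{H,\ell}$ as a small perturbation of $W_H$. The plan is as follows. Given $w_{H,\ell}=(1-\corrector_\ell)v_H\in W_{H,\ell}$ with $v_H\in V_H$, I would compare it with the ideal test function $w_H=(1-\corrector)v_H\in W_H$ built from the same $v_H$. By Theorem~\ref{theorem:decay},
\begin{align*}
  \normHsemi{w_H-w_{H,\ell}}{\Omega}
   = \normHsemi{\corrector v_H-\corrector_\ell v_H}{\Omega}
   \lesssim C(H,h,\epsilon,b,\ell)\,\beta^{\ell-1}\normHsemi{v_H}{\Omega}.
\end{align*}
First I would test against $u_H:=I_H(w_H)=v_H$ (recall $I_H\corrector=0$, so $I_Hw_H=v_H$), exactly as in the proof of Lemma~\ref{lemma:infsup}, to get $a(u_H,w_H)=\epsilon\normHsemi{w_H}{\Omega}^2$ and $\normHsemi{u_H}{\Omega}\le\CI\normHsemi{w_H}{\Omega}$. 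Then I would write
\begin{align*}
  a(u_H,w_{H,\ell}) = a(u_H,w_H) - a(u_H,w_H-w_{H,\ell})
   \ge \epsilon\normHsemi{w_H}{\Omega}^2 - C(\Omega,b)\normHsemi{u_H}{\Omega}\normHsemi{w_H-w_{H,\ell}}{\Omega},
\end{align*}
using continuity~\eqref{eqn:continuitya} for the second term.

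\textbf{Closing the estimate.} The remaining work is bookkeeping: bound $\normHsemi{u_H}{\Omega}\le\CI\normHsemi{w_H}{\Omega}$, and use $\normHsemi{w_H}{\Omega}\le\normHsemi{w_{H,\ell}}{\Omega}+\normHsemi{w_H-w_{H,\ell}}{\Omega}$ together with the decay bound above so that, once
\begin{align*}
  C(\Omega,b)\,\CI\,C(H,h,\epsilon,b,\ell)\,\beta^{\ell-1}\le \tfrac12\,\epsilon
\end{align*}
— which is precisely what the oversampling condition~\eqref{eqn:ell} encodes after taking logarithms, since $\log(1/\beta)\approx\tfrac12|\log(4\CI+3\CI^2)-\log(1+4\CI+3\CI^2)|$ and the numerator collects $|\log\CI|$, $|\log C(H,h,\epsilon,b,\ell)|$ and the continuity/Péclet factors — the perturbation term is absorbed and one obtains $a(u_H,w_{H,\ell})\gtrsim \epsilon\normHsemi{w_H}{\Omega}^2\gtrsim\epsilon\normHsemi{w_{H,\ell}}{\Omega}^2$. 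Dividing by $\normHsemi{u_H}{\Omega}\normHsemi{w_{H,\ell}}{\Omega}\lesssim\CI\normHsemi{w_H}{\Omega}\normHsemi{w_{H,\ell}}{\Omega}$ then yields~\eqref{eqn:infsupLOD}. One also needs that $w_{H,\ell}\ne0$ forces $v_H\ne0$ (hence $w_H\ne0$); for $\ell$ large enough the inequality $\normHsemi{w_H-w_{H,\ell}}{\Omega}\le\tfrac12\normHsemi{w_H}{\Omega}$ makes $w_H\mapsto w_{H,\ell}$ injective, so $\dim W_{H,\ell}=\dim V_H$ and the supremum is over a genuinely $\dim V_H$-dimensional space, which together with inf-sup gives well-posedness of~\eqref{eqn:LOD}.

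\textbf{Main obstacle.} The routine part — splitting $a(u_H,w_{H,\ell})$ and applying continuity — is straightforward; the delicate point is verifying that the threshold on $\ell$ really is~\eqref{eqn:ell}, i.e., chasing all the $\CI$-, Péclet- and $C(H,h,\epsilon,b,\ell)$-dependent constants through the absorption inequality and checking that $\log(1/\beta)$ is comparable to the stated denominator. A subtlety worth flagging is that $C(H,h,\epsilon,b,\ell)$ itself depends (mildly, polynomially/logarithmically) on $\ell$ through $\Col[\ell+2]$, so the condition~\eqref{eqn:ell} is implicit in $\ell$; one must argue this is harmless because $\beta^{\ell-1}$ decays geometrically while $C(H,h,\epsilon,b,\ell)$ grows at most polynomially in $\ell$, so a finite $\ell$ satisfying the bound exists. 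I would also double-check that the constant $\tfrac12$ chosen in the absorption step matches the implied constant hidden in the $\gtrsim$ of~\eqref{eqn:infsupLOD} and of~\eqref{eqn:ell}.
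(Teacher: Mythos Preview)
Your approach is correct and essentially identical to the paper's: the same ideal companion $w_H=(1-\corrector)I_H w_{H,\ell}$, the same choice $u_H=I_H w_H$, the same appeal to Theorem~\ref{theorem:decay}, and the same absorption argument. The one refinement the paper adds is that, instead of the generic continuity bound $|a(u_H,w_H-w_{H,\ell})|\le C(\Omega,b)\,|u_H|_{H^1}|w_H-w_{H,\ell}|_{H^1}$, it exploits $w_H-w_{H,\ell}\in\mathrm{Ker}\,I_H$ together with~\eqref{eqn:interpolation} to obtain the sharper bound $|a(u_H,w_H-w_{H,\ell})|\le\epsilon(1+\Peclet{H}\CI)\,|u_H|_{H^1}|w_H-w_{H,\ell}|_{H^1}$; this is precisely what makes the P\'eclet term $|\log(1+\CI+\Peclet{H}\CI^2)|$ appear in~\eqref{eqn:ell}, whereas your version would produce $|\log(1/\epsilon)|$ in its place --- an equivalent but differently phrased threshold.
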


\begin{remark}
If $H/h=1/\epsilon$ and $\lvert b\rvert=1$, then~\eqref{eqn:ell} reads 
\begin{align*}
 \ell\gtrsim (\log(\epsilon))^2,
%\frac{1+\lvert\log(\epsilon)\rvert}{\log(1+1/\log(\epsilon)^2)}.
\end{align*}
i.e., the local patch size $\ell$ depends on $\log(\epsilon)$ algebraically. 
\end{remark}

\begin{remark}
Since the dimension of $V_H$ equals the dimension of $W_{H,\ell}$, the 
reverse inf-sup condition 
\begin{align}\label{eqn:infsupLOD2}
\inf_{u_{H}\in V_{H}\setminus\{0\}}\sup_{w_{H,\ell}\in W_{H,\ell}\setminus\{0\}}
          \frac{a(u_{H},w_{H,\ell} )}{\normHsemi{u_{H}}{\Omega}\normHsemi{w_{H,\ell}}{\Omega}}
            \gtrsim  \frac{\epsilon}{\CI}.
\end{align}
follows from Lemma~\ref{lemma:stabilityLOD}.
\end{remark}

\begin{proof}[Proof of Lemma~\ref{lemma:stabilityLOD}]
Let $w_{H,\ell}\in W_{H,\ell}$, and set $w_{H}=(1-\corrector )I_{H}w_{H,\ell}\in W_H$. 
By Lemma \ref{lemma:infsup}, there exists $u_H\in V_H$, s.t.,
\begin{align}\label{eqn:777}
  a(u_{H},w_{H} )
  \geq \frac{\epsilon}{\CI}\normHsemi{u_{H}}{\Omega}\normHsemi{w_{H}}{\Omega}.
\end{align}

Taking into account that $w_{H,\ell}=I_H w_{H,\ell}-\corrector_\ell I_H w_{H,\ell}$, 
we arrive at $w_H-w_{H,\ell} = (\corrector_\ell-\corrector)I_H w_{H,\ell}$. 
As a consequence, 
Theorem~\ref{theorem:decay} together with the stability of $I_H$ 
from~\eqref{eqn:interpolation} implies 
%the existence of some constant 
%$\tilde{C}$ satisfying
\begin{align*}
 \normHsemi{w_{H}-w_{H,\ell}}{\Omega} 
    &\leq \tilde{C} C(H,h,\epsilon,b,\ell) \beta^{\ell-1}
            \normHsemi{I_H w_{H,\ell}}{\Omega}\\
    &\leq \tilde{C} C(H,h,\epsilon,b,\ell) \CI \beta^{\ell-1}
            \normHsemi{w_{H,\ell}}{\Omega}.
\end{align*}
Here, $\tilde{C}$ denotes the constant hidden in $\lesssim$ in Theorem~\ref{theorem:decay}, 
which is independent of $H$, $h$ or $\epsilon$. 
%By the V-elliptic of $a(\cdot,\cdot)$ and , we obtain,
%\begin{align*}
%\epsilon\normHsemi{v_{H,l}^*}{\Omega}^2&=a(v_{H,l}^*,v_{H,l}^*)=
%a(v_{H,l}^*,v_{H,\infty}^*)\\
%&\leq (\epsilon+\normI{b(x)}{\Omega}C(\Omega))\normHsemi{v_{H,l}^*}{\Omega}\normHsemi{v_{H,\infty}^*}{\Omega}.
%\end{align*}
The combination with a triangle inequality leads to
\begin{align*}
 \normHsemi{w_H}{\Omega}
   &\geq \normHsemi{w_{H,\ell}}{\Omega} - \normHsemi{w_h-w_{H,\ell}}{\Omega}\\
   &\geq (1-\tilde{C}C(H,h,\epsilon,b,\ell) \CI \beta^{\ell-1})\normHsemi{w_{H,\ell}}{\Omega}.
\end{align*}
Since $I_H(w_{H,\ell}-w_H)=0$, i.e., $w_{H,\ell}-w_H\in R_H$, this leads to 
\begin{align*}
 \lvert a(u_{H},w_{H,\ell}-w_{H})\rvert
   &\leq (\epsilon + \|b\|_{L^\infty(\Omega)} H \CI) 
      \normHsemi{u_H}{\Omega} \normHsemi{w_{H,\ell}-w_H}{\Omega}\\
&=\epsilon (1+\text{Pe}_{H,b,\epsilon}\CI) \normHsemi{u_H}{\Omega} \normHsemi{w_{H,\ell}-w_H}{\Omega}.
\end{align*}
The combination of the above displayed inequalities results in 
\begin{align*}
  &a(u_{H},w_{H,\ell})=a(u_{H},w_{H})+a(u_{H},w_{H,\ell}-w_{H})\\
  &\geq \frac{\epsilon}{\CI}(1-\tilde{C}C(H,h,\epsilon,b,\ell) \CI \beta^{\ell-1}) 
           \normHsemi{u_{H}}{\Omega}\normHsemi{w_{H,\ell}}{\Omega}\\
  &\qquad
   - \epsilon(1 + \Peclet{H}\CI) C(H,h,\epsilon,b,\ell) \tilde{C} \CI \beta^{\ell-1} 
    \normHsemi{u_{H}}{\Omega}\normHsemi{w_{H,\ell}}{\Omega}.
\end{align*}
Recall the definition of $\beta$ from~\eqref{eqn:defbeta}.
If $\ell$ satisfies \eqref{eqn:ell}, then we obtain~\eqref{eqn:infsupLOD}.
\end{proof}

%---------------
We are ready to estimate the error $\normHsemi{u_{H}-u_{H,\ell}}{\Omega}$
coming from the localization.
\begin{lemma}\label{lemma:discrerrorlocal}
Let $\ell$ satisfy~\eqref{eqn:ell}. Then
\begin{equation}\label{eqn:errorGL}
\begin{aligned}
  \normHsemi{u_{H}-u_{H,\ell}}{\Omega}
    &\lesssim \CI^2 C(H,h,\epsilon,b,\ell) (1+\Peclet{H}\CI)\\
      &\qquad\qquad\qquad\qquad\qquad \times \beta^{\ell-1}
           \normHsemi{u_h-u_H}{\Omega}.
\end{aligned}
\end{equation}
\end{lemma}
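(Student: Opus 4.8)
The plan is to exploit the inf-sup stability of the localized pairing $(V_H,W_{H,\ell})$ established in Lemma~\ref{lemma:stabilityLOD} together with a Galerkin-type orthogonality argument comparing the two Petrov--Galerkin problems \eqref{eqn:ideal} and \eqref{eqn:LOD}. Write $e:=u_H-u_{H,\ell}\in V_H$. By \eqref{eqn:infsupLOD} (or its dimensional counterpart, whichever is convenient here) there exists $w_{H,\ell}\in W_{H,\ell}\setminus\{0\}$ with
\[
 \epsilon \CI^{-1}\normHsemi{e}{\Omega}\normHsemi{w_{H,\ell}}{\Omega}
   \lesssim a(e,w_{H,\ell}) = a(u_H,w_{H,\ell}) - a(u_{H,\ell},w_{H,\ell}).
\]
The second term equals $\langle f,w_{H,\ell}\rangle$ by \eqref{eqn:LOD}. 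For the first term, the natural move is to introduce the corresponding ideal test function $\widetilde w_H:=(1-\corrector)I_H w_{H,\ell}\in W_H$, for which $a(u_H,\widetilde w_H)=\langle f,\widetilde w_H\rangle$ by \eqref{eqn:ideal}. Then
\[
 a(e,w_{H,\ell}) = \big(\langle f,w_{H,\ell}\rangle - \langle f,\widetilde w_H\rangle\big) + a(u_H, \widetilde w_H - w_{H,\ell}),
\]
and since $f$ enters only through the reference solution $u_h$ one rewrites $\langle f, \cdot\rangle = a(u_h,\cdot)$ on $V_h$, giving $a(e,w_{H,\ell}) = a(u_h-u_H,\, w_{H,\ell}-\widetilde w_H)$ — here it is crucial that $w_{H,\ell}-\widetilde w_H = (\corrector-\corrector_\ell)I_H w_{H,\ell}\in\mathrm{Ker}\,I_H$, so that no low-order part survives.

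Next I would bound $a(u_h-u_H,\,w_{H,\ell}-\widetilde w_H)$ by the continuity estimate \eqref{eqn:continuitya}, splitting into the $\epsilon$-part and the convective part and controlling $\normL{w_{H,\ell}-\widetilde w_H}{\Omega}$ by $H\CI\normHsemi{w_{H,\ell}-\widetilde w_H}{\Omega}$ via the kernel property and \eqref{eqn:interpolation} (exactly as done repeatedly in the proof of Lemma~\ref{lemma:stabilityLOD}); this produces the factor $\epsilon(1+\Peclet{H}\CI)$. The term $\normHsemi{w_{H,\ell}-\widetilde w_H}{\Omega}=\normHsemi{(\corrector-\corrector_\ell)I_Hw_{H,\ell}}{\Omega}$ is then estimated by Theorem~\ref{theorem:decay} followed by the stability of $I_H$, yielding $\tilde C\, C(H,h,\epsilon,b,\ell)\,\CI\,\beta^{\ell-1}\normHsemi{w_{H,\ell}}{\Omega}$. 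Combining with the inf-sup lower bound, dividing through by $\epsilon\CI^{-1}\normHsemi{w_{H,\ell}}{\Omega}$, cancels the $\epsilon$ and leaves precisely
\[
 \normHsemi{e}{\Omega}\lesssim \CI^2\, C(H,h,\epsilon,b,\ell)\,(1+\Peclet{H}\CI)\,\beta^{\ell-1}\,\normHsemi{u_h-u_H}{\Omega},
\]
which is \eqref{eqn:errorGL}.

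The main obstacle I anticipate is purely bookkeeping of constants: making sure that (i) the low-order part of $w_{H,\ell}-\widetilde w_H$ genuinely vanishes so the convective term is controlled without a stray $\epsilon^{-1}$, (ii) the $\epsilon$ coming from the inf-sup denominator cancels against the $\epsilon$ in the continuity bound so that the final estimate is $\epsilon$-robust, and (iii) the powers of $\CI$ in the statement (namely $\CI^2$) are accounted for — one $\CI$ from $\normHsemi{I_Hw_{H,\ell}}{\Omega}\le\CI\normHsemi{w_{H,\ell}}{\Omega}$, one from the inf-sup constant $\epsilon/\CI$, and the $\CI$ already inside $C(H,h,\epsilon,b,\ell)$ is absorbed there. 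There is no genuinely hard analytic step: everything reduces to Theorem~\ref{theorem:decay}, the stability of $I_H$, and the continuity of $a$, assembled through the inf-sup inequality of Lemma~\ref{lemma:stabilityLOD}.
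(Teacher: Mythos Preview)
Your proposal is correct and follows essentially the same route as the paper's proof: apply the reverse inf-sup condition \eqref{eqn:infsupLOD2}, rewrite $a(u_H-u_{H,\ell},w_{H,\ell})=a(u_H-u_h,w_{H,\ell}-\widetilde w_H)$ via the two Petrov--Galerkin identities and the relation $\langle f,\cdot\rangle=a(u_h,\cdot)$ on $V_h$, then estimate using the kernel property, Theorem~\ref{theorem:decay}, and the stability of $I_H$. (Your intermediate displayed identity carries a harmless sign slip---the right-hand side equals $-a(e,w_{H,\ell})$---but the subsequent conclusion and constant bookkeeping are unaffected.)
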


\begin{proof}
Notice that $u_{H}-u_{H,\ell}\in V_H$ is a coarse finite element function. 
Therefore, the 
inf-sup condition~\eqref{eqn:infsupLOD2} guarantees the existence of 
$w_{H,\ell}\in W_{H,\ell}$ with 
\[\normHsemi{u_{H}-u_{H,\ell}}{\Omega}
\lesssim \frac{\CI}{\epsilon}\frac{a(u_{H}-u_{H,\ell},w_{H,\ell})}{\normHsemi{w_{H,\ell}}{\Omega}}.\]
In view of $w_{H,\ell}\in W_{H,\ell}\subset V_h$, the standard Galerkin problem~\eqref{eqn:fine-scale}
and the VMS~\eqref{eqn:LOD} imply 
\begin{align*}
  a(u_{H}-u_{H,\ell},w_{H,\ell}) = a(u_H-u_h,w_{H,\ell}).
\end{align*}
Define $w_{H}:=I_H w_{H,\ell} - \corrector(I_{H}w_{H,\ell})\in W_{H}\subset V_h$.
Together with the orthogonality of Petrov-Galerkin type,  we obtain 
\begin{align*}
  a(u_H-u_h,w_{H,\ell}) = a(u_H-u_h,w_{H,\ell}-w_H).
\end{align*}
Taking into account that $w_{H,\ell}-w_H= \corrector_\ell I_H w_{H,\ell} - \corrector I_H w_{H,\ell}$, 
the combination with a Cauchy inequality, $w_{H,\ell}-w_H\in\mathrm{ker}(I_H)$ 
and an application of Theorem~\ref{theorem:decay} lead to
\begin{align*}
  &a(u_H-u_h,w_{H,\ell}-w_H)\\
  &\qquad
     \lesssim (\epsilon+ \|b\|_{L^\infty(\Omega)}H\CI) C(H,h,\epsilon,b,\ell) \beta^{\ell-1}
         \normHsemi{I_H w_{H,\ell}}{\Omega} \normHsemi{u_h-u_H}{\Omega}.
\end{align*}
The stability of $I_H$ from~\eqref{eqn:interpolation} implies the assertion.
\end{proof}

Lemma~\ref{lemma:discrerrorlocal} allows bounding the error for 
the localized VMS in the following manner.

\begin{theorem}[global error estimate for localized VMS]\label{thm:globalerrorboundVMS}
Let $\ell$ satisfy~\eqref{eqn:ell},
then 
\begin{align*}
  \normHsemi{u_h-u_{H,\ell}}{\Omega}
    &\lesssim \left(\CI + \CI^3 C(H,h,\epsilon,b,\ell)
         (1+\Peclet{H}\CI)\beta^{\ell-1}\right)\\
    &\qquad\qquad\qquad\qquad\qquad \qquad\qquad\qquad
       \times\min_{v_H\in V_H} \normHsemi{u_h-v_H}{\Omega}
\end{align*}
with the constant $(C(H,h,\epsilon,b,\ell)$ from~\eqref{eqn:defconstLoc}.
\end{theorem}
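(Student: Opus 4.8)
The plan is to combine the local quasi-optimality of the ideal method from Proposition~\ref{prop:ideal} with the localization error bound of Lemma~\ref{lemma:discrerrorlocal} via a triangle inequality. First I would write
\[
  \normHsemi{u_h-u_{H,\ell}}{\Omega}
    \leq \normHsemi{u_h-u_H}{\Omega} + \normHsemi{u_H-u_{H,\ell}}{\Omega},
\]
where $u_H$ is the solution of the ideal Petrov--Galerkin method \eqref{eqn:ideal}. For the first term, recall $u_H = I_H u_h$, so that \eqref{eqn:interpolation} (summed over all $T\in\tri_H$) gives the global best-approximation bound $\normHsemi{u_h-u_H}{\Omega}\leq \CI\min_{v_H\in V_H}\normHsemi{u_h-v_H}{\Omega}$; this is just the global version of the estimate in Proposition~\ref{prop:ideal}. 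For the second term, Lemma~\ref{lemma:discrerrorlocal} (which applies since $\ell$ satisfies \eqref{eqn:ell}) yields
\[
  \normHsemi{u_H-u_{H,\ell}}{\Omega}
    \lesssim \CI^2 C(H,h,\epsilon,b,\ell)(1+\Peclet{H}\CI)\beta^{\ell-1}
       \normHsemi{u_h-u_H}{\Omega}.
\]

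Then I would again bound $\normHsemi{u_h-u_H}{\Omega}\leq \CI\min_{v_H\in V_H}\normHsemi{u_h-v_H}{\Omega}$ inside this second estimate, producing the factor $\CI^3 C(H,h,\epsilon,b,\ell)(1+\Peclet{H}\CI)\beta^{\ell-1}$. Adding the two contributions and factoring out $\min_{v_H\in V_H}\normHsemi{u_h-v_H}{\Omega}$ gives exactly the claimed bound with prefactor $\CI + \CI^3 C(H,h,\epsilon,b,\ell)(1+\Peclet{H}\CI)\beta^{\ell-1}$. This is essentially a bookkeeping assembly of two already-established estimates, so the proof is short.

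The only genuine subtlety — the "main obstacle" in spirit, though it is already handled upstream — is that Lemma~\ref{lemma:discrerrorlocal} presupposes the inf-sup stability of the localized method \eqref{eqn:LOD} (Lemma~\ref{lemma:stabilityLOD}), which in turn requires the oversampling condition \eqref{eqn:ell}; hence the hypothesis on $\ell$ must be carried through faithfully and one must make sure that the second summand is indeed controlled (ideally the $\beta^{\ell-1}$ factor, combined with \eqref{eqn:ell}, makes the whole localization prefactor of moderate size, so that the dominant term is the quasi-optimality constant $\CI$). I would remark at the end that, under \eqref{eqn:ell}, the bracketed constant is $\lesssim \CI$, recovering the same quasi-optimality as the ideal method up to the logarithmic interpolation constant.
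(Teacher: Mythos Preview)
Your proposal is correct and matches the paper's own proof essentially verbatim: the paper simply states that the result follows from a triangle inequality, Proposition~\ref{prop:ideal}, and Lemma~\ref{lemma:discrerrorlocal}. Your closing remark that under~\eqref{eqn:ell} the bracketed constant reduces to $\lesssim \CI$ is a reasonable extra observation, though the paper does not make it explicit here.
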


\begin{proof}
The proof follows directly from a triangle inequality, Proposition~\ref{prop:ideal},
and Lemma~\ref{lemma:discrerrorlocal}.
\end{proof}

Although Theorem~\ref{thm:globalerrorboundVMS} provides a best-approximation 
result, the assertion still depends on $\epsilon$, which is hidden in the 
best-approximation $\min_{v_H\in V_H} \normHsemi{u_h-v_H}{\Omega}$.
The locality in the error bound of the ideal method from Proposition~\ref{prop:ideal}
transfers to the VMS defined in~\eqref{eqn:LOD} and results in the 
local error bound in the following theorem. 
Note that the error from the localization still depends on the mesh P\'eclet
number of $\tri_H$ and still contains the best-approximation error on the 
whole domain. 
Nevertheless, this ill-behaved terms are weighted by the exponentially 
decaying term $\beta^{\ell-1}$, where $\beta$ is bounded above from 1. 

\begin{theorem}[local error estimate for localized VMS]\label{thm:locerrorVMS}
Let $\ell$ satisfy~\eqref{eqn:ell}. 
Then for any $\mathcal{K}\subset\tri$ and $\omega:=\cup\mathcal{K}$, it 
holds that
\begin{align*}
  &\normHsemi{u_h-u_{H,\ell}}{\omega}
    \lesssim \CI \min_{v_H\in V_H} \normHsemi{u_h-v_H}{\omega} \\
     &\qquad\quad+ \CI^3 C(H,h,\epsilon,b,\ell)(1+\Peclet{H}\CI)\beta^{\ell-1}
             \min_{v_H\in V_H} \normHsemi{u_h-v_H}{\Omega}.
\end{align*}
\end{theorem}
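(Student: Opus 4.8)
The plan is to split the error by a triangle inequality into the \emph{localization error} $\normHsemi{u_H-u_{H,\ell}}{\omega}$ and the \emph{ideal-method error} $\normHsemi{u_h-u_H}{\omega}$, to estimate the first contribution globally (it is already known to be exponentially small in $\ell$ by Lemma~\ref{lemma:discrerrorlocal}), and to estimate the second one locally using that the ideal method is a local quasi-best approximation by Proposition~\ref{prop:ideal}. Note that $u_{H,\ell}$ is well defined by Lemma~\ref{lemma:stabilityLOD} and the subsequent remark, since $\ell$ satisfies~\eqref{eqn:ell}.

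First I would write $\normHsemi{u_h-u_{H,\ell}}{\omega}\leq\normHsemi{u_h-u_H}{\omega}+\normHsemi{u_H-u_{H,\ell}}{\omega}$ and treat the ideal-method term. Recall from Proposition~\ref{prop:ideal} that $u_H=I_Hu_h$, so that for every $T\in\mathcal{K}$ one has $\normHsemi{u_h-u_H}{T}\leq\CI\min_{v_H\in V_H}\normHsemi{u_h-v_H}{T}$. Squaring this, summing over $T\in\mathcal{K}$, and using that the elementwise minima are bounded by the value attained on $\omega$ by a single fixed competitor, i.e. $\sum_{T\in\mathcal{K}}\min_{v_H\in V_H}\normHsemi{u_h-v_H}{T}^2\leq\min_{v_H\in V_H}\normHsemi{u_h-v_H}{\omega}^2$, yields $\normHsemi{u_h-u_H}{\omega}\leq\CI\min_{v_H\in V_H}\normHsemi{u_h-v_H}{\omega}$, which is the first term of the claimed bound.

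For the localization term I would simply bound $\normHsemi{u_H-u_{H,\ell}}{\omega}\leq\normHsemi{u_H-u_{H,\ell}}{\Omega}$, since $u_H-u_{H,\ell}\in V_H$ and $\omega\subset\Omega$, and then invoke Lemma~\ref{lemma:discrerrorlocal} to obtain $\normHsemi{u_H-u_{H,\ell}}{\Omega}\lesssim\CI^2 C(H,h,\epsilon,b,\ell)(1+\Peclet{H}\CI)\beta^{\ell-1}\normHsemi{u_h-u_H}{\Omega}$. Applying Proposition~\ref{prop:ideal} once more, now summed over all $T\in\tri_H$, replaces $\normHsemi{u_h-u_H}{\Omega}$ by $\CI\min_{v_H\in V_H}\normHsemi{u_h-v_H}{\Omega}$, which upgrades the prefactor from $\CI^2$ to $\CI^3$ and produces exactly the second term of the claimed bound. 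Adding the two contributions finishes the proof.

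There is essentially no hard step here: the statement is an assembly of Proposition~\ref{prop:ideal} and Lemma~\ref{lemma:discrerrorlocal}. The only point that deserves a moment's attention is that the localization error genuinely cannot be localized — one is forced to estimate it on all of $\Omega$ — which is why the second summand of the bound carries the \emph{global} best-approximation error $\min_{v_H\in V_H}\normHsemi{u_h-v_H}{\Omega}$ together with the mesh-P\'eclet-dependent constant $C(H,h,\epsilon,b,\ell)$; this is harmless precisely because it is multiplied by the exponentially small factor $\beta^{\ell-1}$ with $\beta<1$ bounded away from $1$.
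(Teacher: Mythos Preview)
Your proof is correct and follows precisely the approach the paper intends: the paper does not spell out a separate proof for this theorem but indicates, in the text preceding it and via the one-line proof of Theorem~\ref{thm:globalerrorboundVMS}, that it follows from a triangle inequality combined with the local best-approximation of Proposition~\ref{prop:ideal} and the global localization bound of Lemma~\ref{lemma:discrerrorlocal}. Your write-up makes this explicit and handles the elementwise-to-$\omega$ passage carefully; nothing is missing.
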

%Note that the error from the localization still contains a best-approximation
%error on the whole domain. However, it is weighted with the exponentially 
%decaying term $\beta^\ell$.

\begin{remark}[complexity]
The problem~\eqref{eqn:LOD} on the coarse scale consists of $\mathcal{O}(1/H^2)$
degrees of freedom (DOFs). Corresponding to each of those DOFs, one localized 
corrector problem~\eqref{eqn:corrector1_local} has to be solved, which relates to 
$\mathcal{O}(\ell^2 H^3/ (h^2\epsilon))$ DOFs in the worst case scenario. 
If the mesh is structured, the number of corrector problems that have 
to be solved can be reduced to $\mathcal{O}(\ell H/\epsilon)$, 
cf.\ \cite{Gallistl.Peterseim:2015}.
\end{remark}

\section{Numerical experiment}\label{s:numerics}
In this section, we present one simple numerical test to illustrate the
theoretical convergence results of the 
localized method proposed in \eqref{eqn:LOD}. 
We take $\Omega=(0,1)\times(0,1)$, the velocity field
$b=(\cos(0.7),  \sin(0.7))^\top$, the volume force $f\equiv 1$ and 
$\epsilon = 2^{-7}$.
The reference solution $u_h$ is obtained through \eqref{eqn:fine-scale} by 
taking $h=\sqrt{2}\,2^{-8}$. 

We will compare our approache with SUPG. Let us briefly review
the SUPG model to \eqref{eq:original} \cite{FRANCA1992253}.
Let $(\bullet,\bullet)_T:=(\bullet,\bullet)_{L^2(T)}$ denote the 
$L^2$ scalar product over a triangle $T\in\tri_H$.
Then SUPG seeks $u_H\in V_H$ such that 
\begin{align}\label{eq:SUPG}
B_{\mathrm{SUPG}}(u_H^{\mathrm{SUPG}},v_H)=F_{\mathrm{SUPG}}(v_H) 
  \qquad \text{for all }v_H\in V_H
\end{align}
with 
\begin{align*}
B_{\mathrm{SUPG}}(u_H^{\mathrm{SUPG}},v_H)=a(u_H^{\mathrm{SUPG}},v_H)
    +\delta_\mathrm{SUPG}\sum\limits_{T\in \mathcal{T}_H} 
         (b\cdot\nabla u_H^{\mathrm{SUPG}},b\cdot\nabla v_H)_{T}
\end{align*}
and 
\begin{align*}
F_{\mathrm{SUPG}}(v_H)
   =\langle f,v_H\rangle_{H^{-1}(\Omega)\times H^1_0(\Omega)}
 +\delta_\mathrm{SUPG}\sum\limits_{k\in \mathcal{T}_H} (f,b\cdot\nabla v_H)_{T}.
\end{align*}
Here,  $\delta_\mathrm{SUPG}$ indicates the stability parameter, and we 
choose
 \[
\delta_\mathrm{SUPG} = \frac{H}{\sqrt{8}\max(\epsilon,H/\sqrt{2})}
\]
in our numerical test. 

The reference solution from \eqref{eqn:fine-scale} and the coarse scale solution 
from \eqref{eqn:LOD} and the SUPG solution from~\eqref{eq:SUPG} 
with $H=\sqrt{2}\,2^{-4}$ are depicted in 
Figure~\ref{fig:solutions}. 
One can observe that the classical FEM approximation with $H=\sqrt{2}\,2^{-4}$ is not 
stable around the boundary layers (i.e. the top and right boundaries)
and shows spurious oscillations, and thus 
fails to provide a reliable solution.
Nevertheless, both the SUPG method and the ideal method are stable and generate 
an accurate solution. We display the solutions for fixed $y=0.75$ to illustrate 
the stability and accuracy of the VMS method in Figure~\ref{fig:solutions_y}. 
We observe large oscillations in the coarse scale solution obtained 
through classical FEM when $x$ approaches 1, while the SUPG and the 
VMS method yield reliable solutions. The smearing is restricted to one layer 
of elements around the boundary.
We can also conclude that the SUPG and 
the VMS method reproduce the reference solution away from $x=1$ and the 
latter shows slightly less smearing. We want to 
highlight that the localization parameter is $\ell=1$ for the VMS method 
in this example.

\begin{figure}
  \begin{center}
     \includegraphics[width=0.49\textwidth]{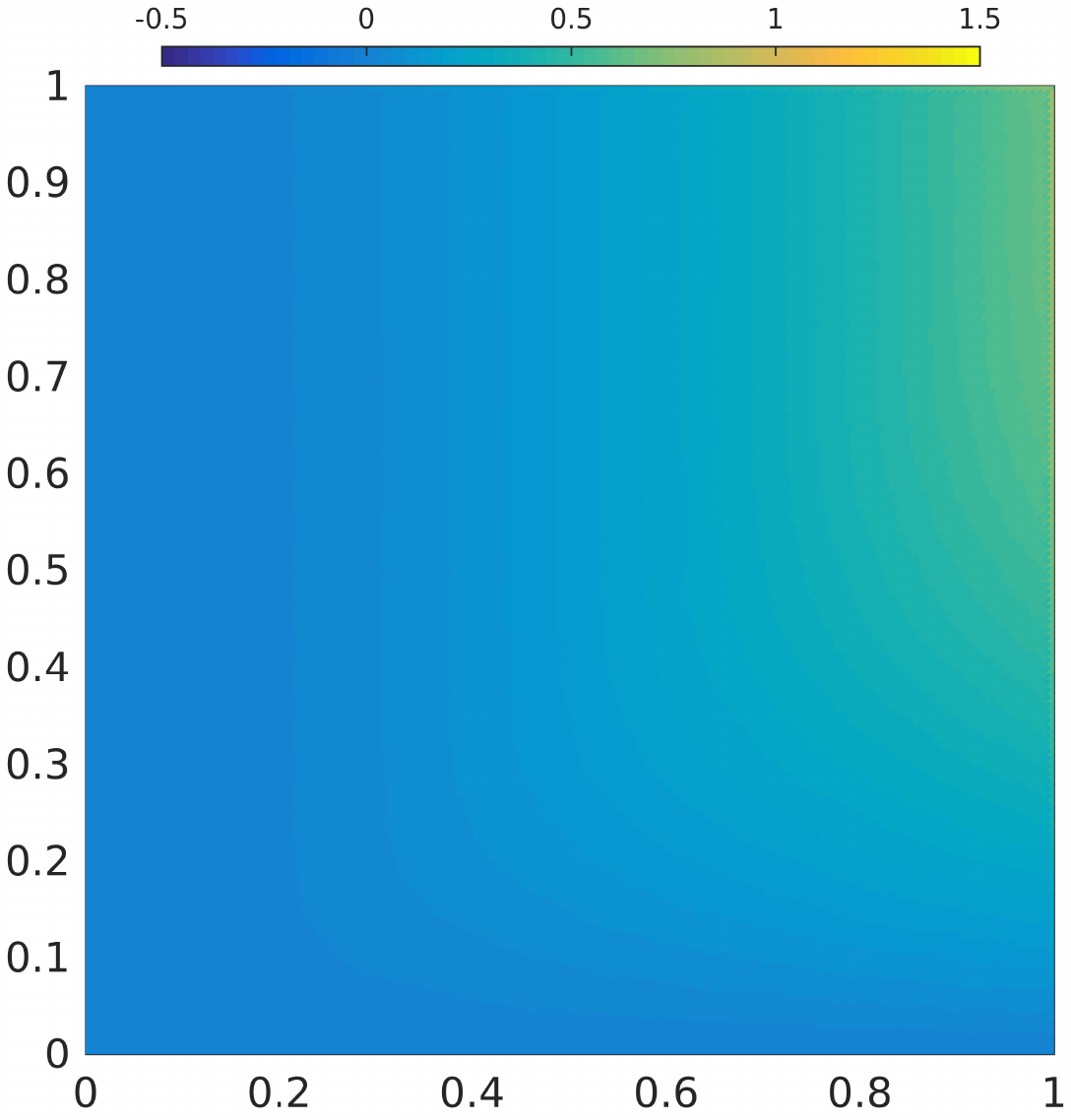}
     \includegraphics[width=0.49\textwidth]{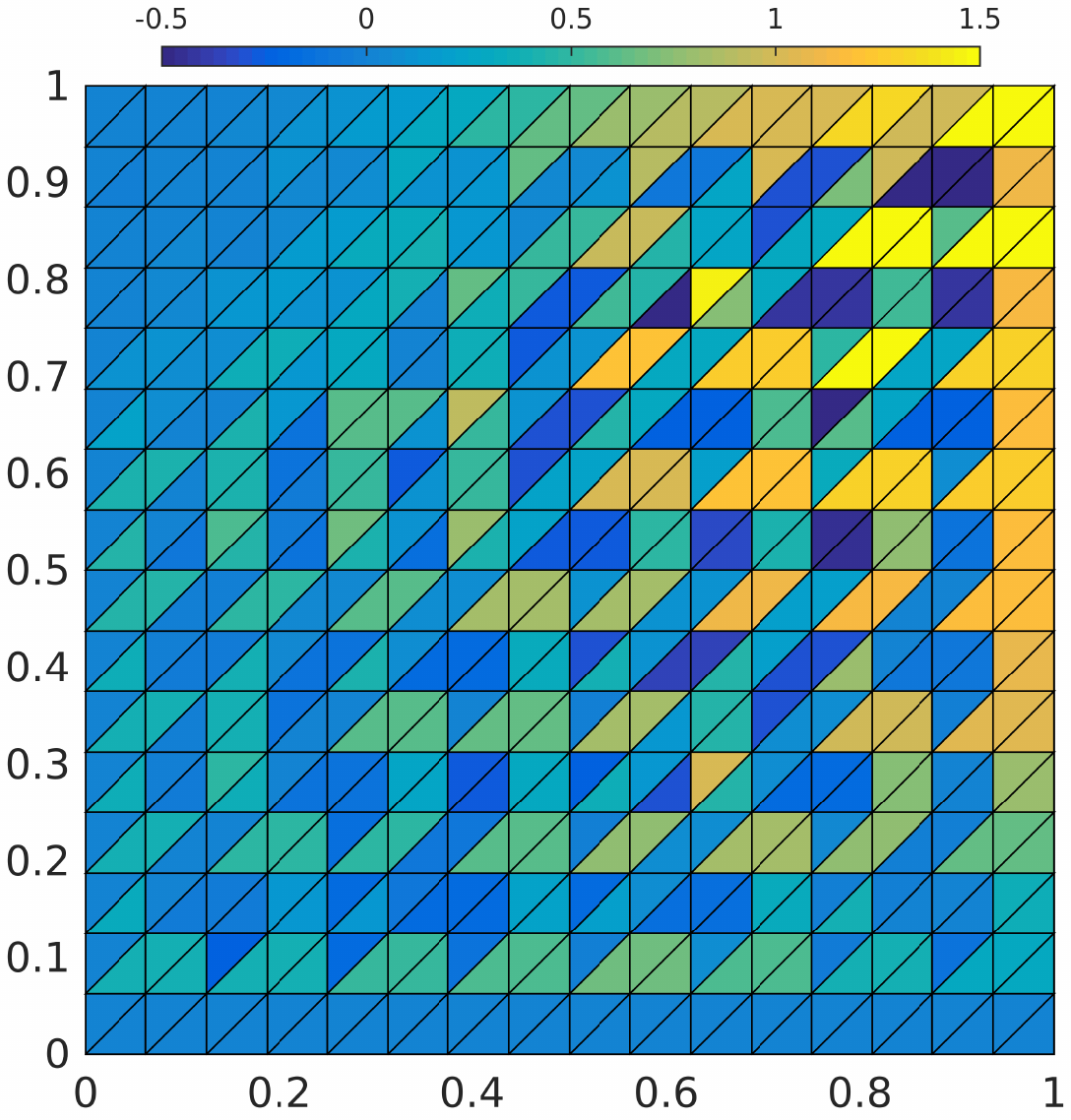}\\
     \includegraphics[width=0.49\textwidth]{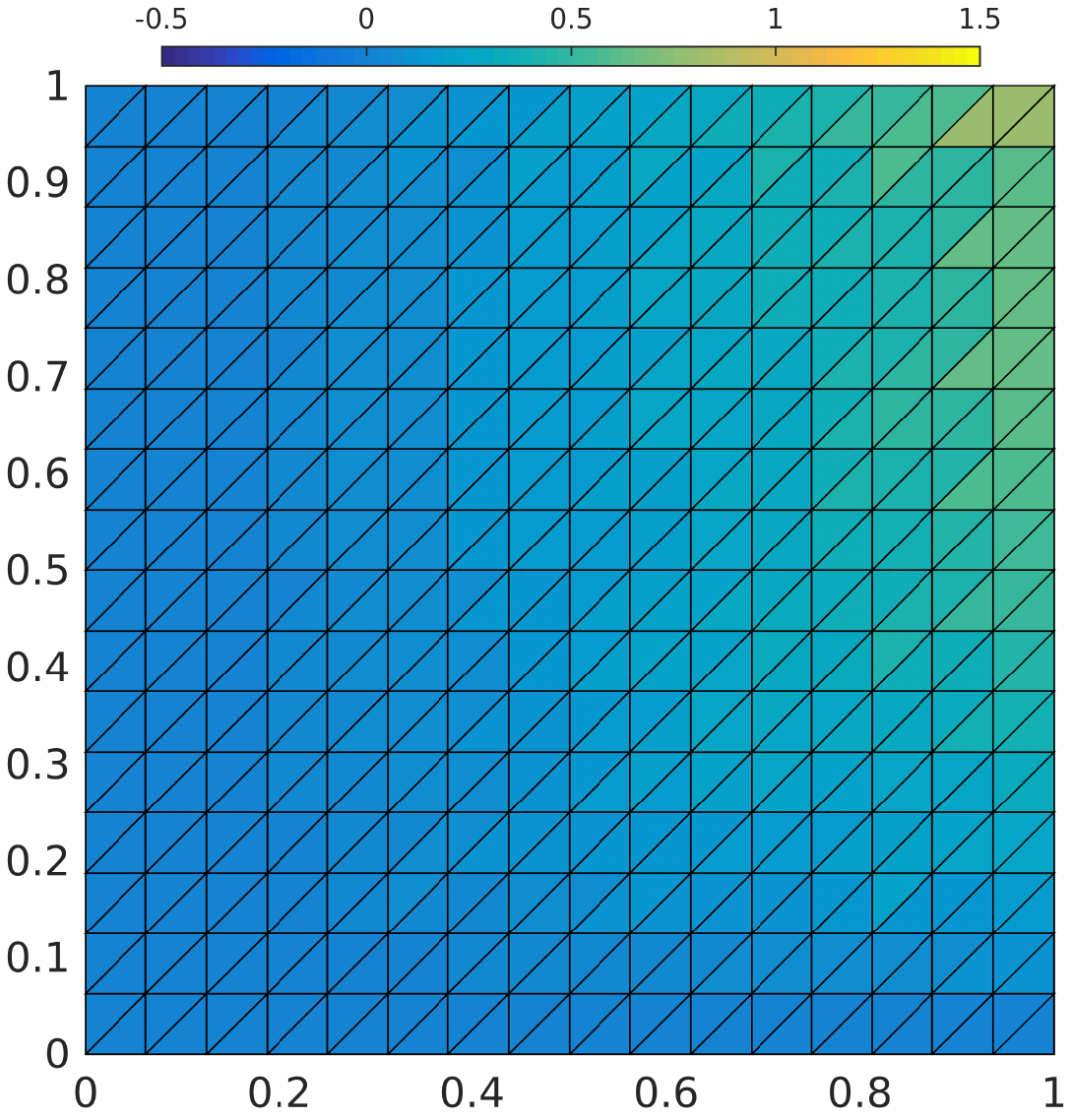}
     \includegraphics[width=0.49\textwidth]{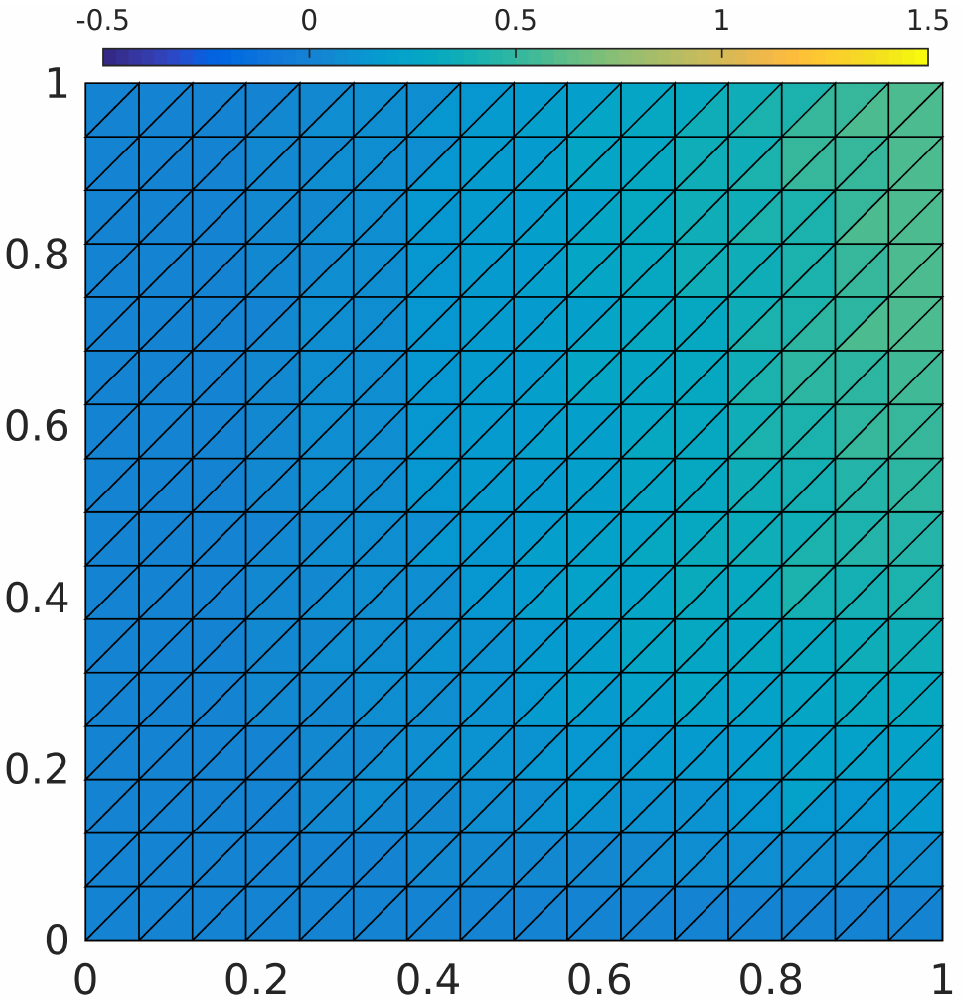}
  \end{center}
  \caption{Reference solution (top left), classical FEM approximation (top right),
          SUPG approximation (bottom left), and multiscale approximation for $\ell=1$ (bottom right)
          for $\epsilon=2^{-7}$ and $H=\sqrt{2}\,2^{-4}$.}
\label{fig:solutions}
\end{figure}

\begin{figure}
  \begin{center}
    \includegraphics[width=0.7\textwidth]{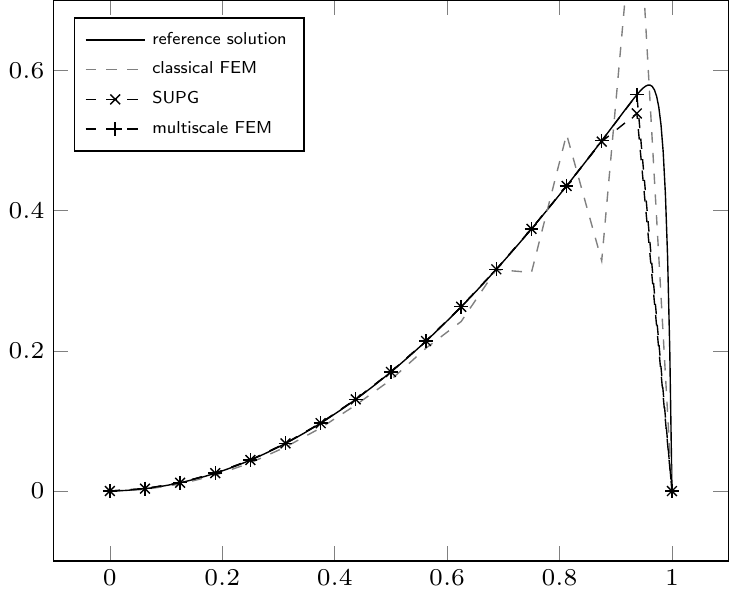}
  \end{center}
  \caption{Reference solution, classical FEM, SUPG and multiscale approximation for $\ell=1$
       at the line $y=0.75$ on a mesh with mesh-size $H=\sqrt{2}\,2^{-4}$
       for $\epsilon=2^{-7}$.}
\label{fig:solutions_y}
\end{figure}

Tables~\ref{table:h1_layer} and \ref{table:l2_layer} display 
the errors between the localized 
solutions \eqref{eqn:LOD} and the reference solution $u_h$
under various coarse mesh-sizes $H$ and localization parameters $\ell$. 
We observe an optimal convergence rate of $\mathcal{O}(H)$ in 
Table~\ref{table:h1_layer} for the error in the $H^1$ semi norm in the domain 
$[0,0.75]\times [0,0.75]$ away from the boundary layers, and  
an optimal convergence rate of $\mathcal{O}(H^2)$ in Table~\ref{table:l2_layer} 
for the global error in the $L^2$ norm. Although 
Theorem~\ref{thm:locerrorVMS} guarantees optimality only under the 
assumption that $\ell$ is large enough in the sense of~\eqref{eqn:ell},
the numerical experiment demonstrates that $\ell=1$ is sufficient 
for an accurate solution, which implies a huge potentially computational reduction. 
%The theoretical justification of a slacking dependence on $\ell$ will be our future focus.

The convergence rate for $u_{H,1}$ with various $\epsilon$ in a range 
from $2^{-5}$ to $2^{-8}$ is shown in Figures~\ref{fig:h1_local_convergence} 
and \ref{fig:l2_global_convergence}. 
The error is stable and of order $\mathcal{O}(H)$ 
with respect to the $H^1$ semi norm in a region away from boundary layers 
and of order $\mathcal{O}(H^2)$ in the global $L^2$ norm with a 
preasymptotic effect for smaller values of $\epsilon$. For comparison, the nodal 
interpolation error (i.e. the error from the ideal method) in the global $L^2$ 
norm is depicted, which agrees with $\norm{u_h-u_{H,1}}_{L^2(\Omega)}$ very well. 
This justifies the fast convergence of the localized method with respect 
to the localization parameter $\ell$ for all of the considered values 
of $\epsilon$. 

\begin{table}[htp]
\begin{center}
\begin{tabular}{l|llllll}
 &$\ell=1$&	$\ell=2$&	$\ell=3$&	$\ell=4$&	$\ell=5$&	$\ell=6$\\
\hline
$H=\sqrt{2}2^{-3}$ 	&5.14e-02	&5.14e-02	&5.14e-02	&5.14e-02	&5.14e-02	&5.14e-02	\\ 
$H=\sqrt{2}2^{-4}$	&2.57e-02	&2.57e-02	&2.57e-02	&2.57e-02	&2.57e-02	&2.57e-02	\\ 
$H=\sqrt{2}2^{-5}$	&1.27e-02	&1.27e-02	&1.27e-02	&1.27e-02	&1.27e-02	&1.27e-02	\\ 
$H=\sqrt{2}2^{-6}$	&6.23e-03	&6.23e-03	&6.23e-03	&6.23e-03	&6.23e-03	&6.23e-03
\end{tabular}
\end{center}
\caption{The error $\|\nabla(u_h-u_{H,\ell})\|_{L^2(\Omega_r)}$ for 
 $\Omega_r=[0,0.75]\times [0,0.75]$ for different localization parameters $\ell$
 and mesh-sizes $H$ for $\epsilon=2^{-7}$.}
\label{table:h1_layer}
\end{table}

\begin{table}[htp]
\begin{center}
\begin{tabular}{l|llllll}
 &$\ell=1$&	$\ell=2$&	$\ell=3$&	$\ell=4$&	$\ell=5$&	$\ell=6$\\
\hline
$H=0.17678$ 	&9.45e-02	&9.45e-02	&9.45e-02	&9.45e-02	&9.45e-02	&9.45e-02	\\ 
$H=0.088388$	&5.34e-02	&5.34e-02	&5.34e-02	&5.34e-02	&5.34e-02	&5.34e-02	\\ 
$H=0.044194$	&2.31e-02	&2.32e-02	&2.32e-02	&2.32e-02	&2.32e-02	&2.32e-02	\\ 
$H=0.022097$	&7.25e-03	&7.27e-03	&7.27e-03	&7.27e-03	&7.27e-03	&7.27e-03	\\ 
\end{tabular}
\end{center}
\caption{The error $\|u_h-u_{H,\ell}\|_{L^2(\Omega)}$ for different 
 localization parameters $\ell$  and mesh-sizes $H$ for $\epsilon=2^{-7}$.}
\label{table:l2_layer}
\end{table}

\begin{figure}
  \begin{center}
    \includegraphics[width=0.7\textwidth]{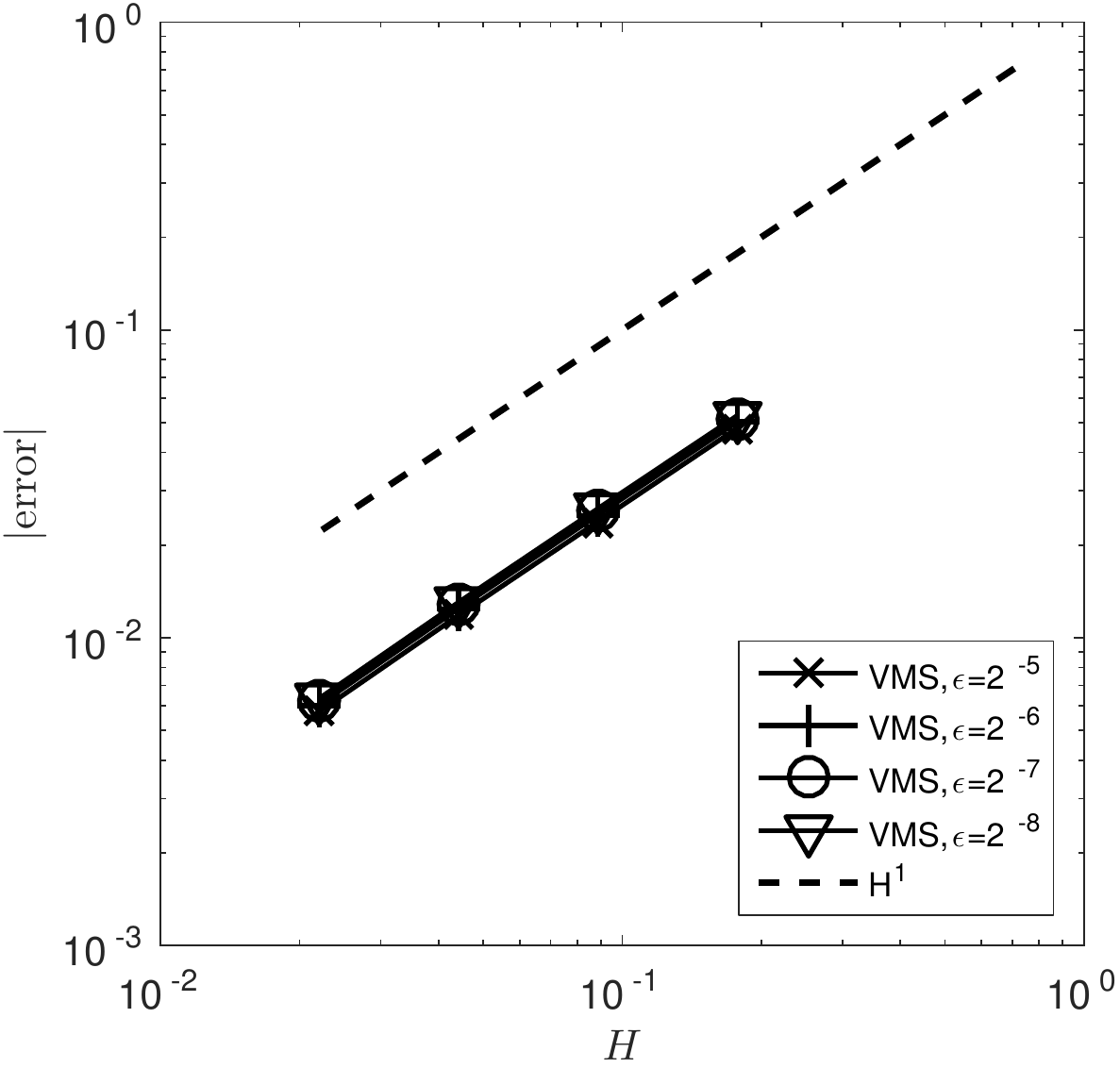}
  \end{center}
  \caption{The errors $\|\nabla(u_h-u_{H,1})\|_{L^2(\Omega_r)}$ for 
  $\Omega_r=[0,0.75]\times [0,0.75]$ for different values of $\epsilon$.}
\label{fig:h1_local_convergence}
\end{figure}

\begin{figure}
  \begin{center}
    \includegraphics[width=0.7\textwidth]{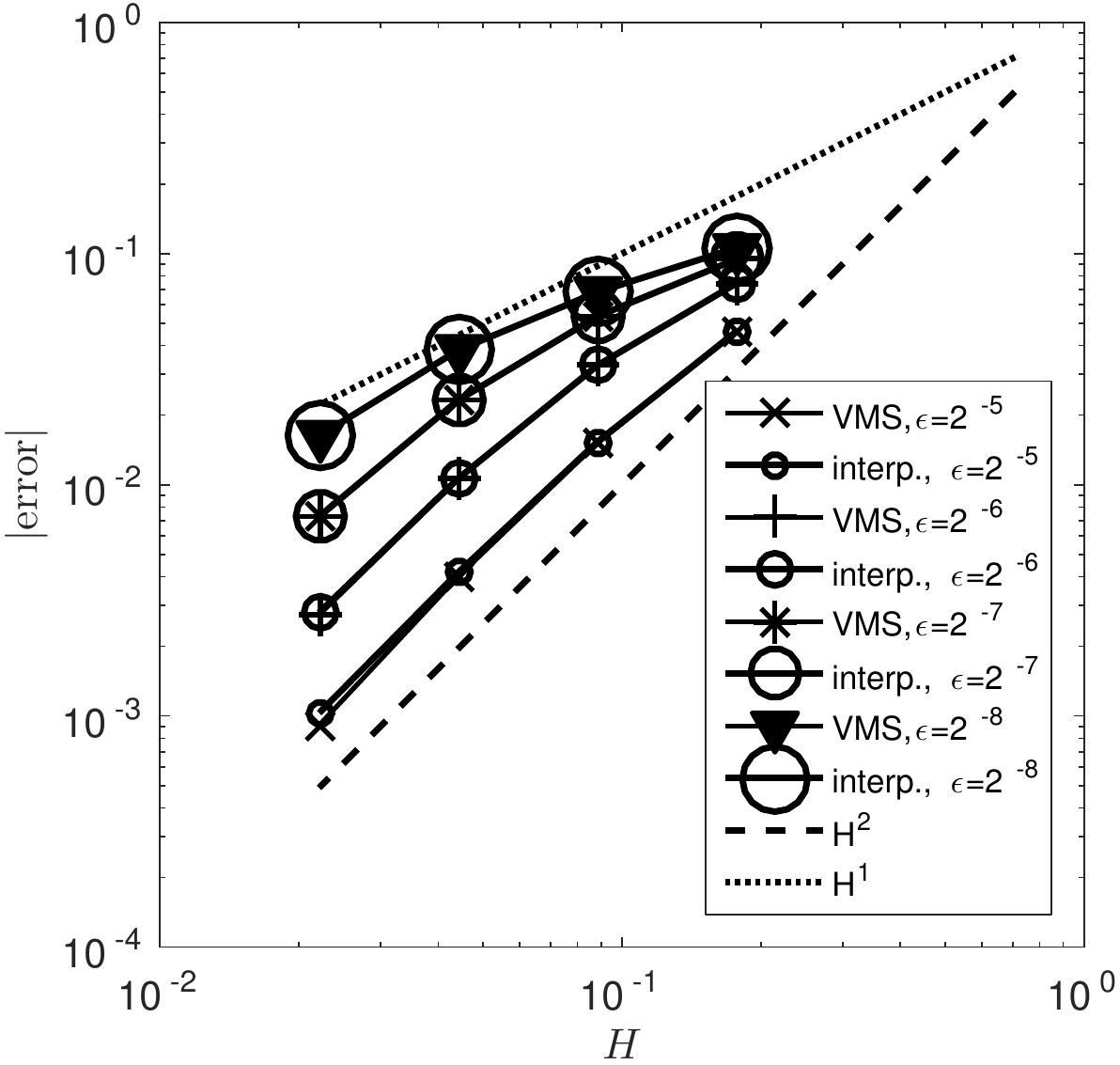}
  \end{center}
  \caption{The errors $\|u_h-u_{H,1}\|_{L^2(\Omega)}$ and 
       $\|u_h-I_H u_h\|_{L^2(\Omega)}$ for different values of $\epsilon$.}
\label{fig:l2_global_convergence}
\end{figure}

\section{Conclusions}\label{s:conclusions}
In this paper, a singularly perturbed convection-diffusion equation was 
considered, and we obtained a stable locally quasi-optimal variational multiscale 
method based on the nodal interpolation operator. 
Due to the high complexity involved in solving the global correctors, 
which account for the main component of the variational multiscale method, 
a further model reduction was proceeded  
by localization techniques based on the LOD method. 
This localization employs local patches which depend on the velocity 
field $b$ and the singular perturbation parameter $\epsilon$.
The error of the localization decays exponentially.
We also provided a numerical experiment to illustrate our theoretical results.  

The stability constant of the nodal interpolation operator that occurs 
in the error estimate, depends logarithmically on $H/h$ (and so on $\epsilon$).
In the three-dimensional case, this stability estimate depends polynomially 
on $H/h$. Therefore a generalization of the proposed method to 3D seems 
to be not reasonable.

The local patches in the localized computation of the corrector depend on $\epsilon$. 
It is an open question, if this is optimal or if a further reduction or simplification 
is possible.

\bibliographystyle{abbrv}
\bibliography{reference}

% \pagebreak

%\appendix{SOME FIGURES...}
%...please put them in the right place, add references to them in the text and adapt notation in captions.

\end{document}